\newtheorem{theorem}{Theorem}[section]
\newtheorem{lemma}[theorem]{Lemma}
\newtheorem{remark}[theorem]{Remark}
\newtheorem{corollary}[theorem]{Corollary}
\newtheorem{assumption}[theorem]{Assumption}
\theoremstyle{definition}
\newtheorem{definition}[theorem]{Definition}
\author{Nir Sharon\footnotemark[3], 
	\and Rafael Sherbo Cohen\thanks{School of Mathematical Sciences, Faculty of Exact Sciences, Tel-Aviv University, Israel (\email{rafaels2@mail.tau.ac.il}, \email{nsharon@tauex.tau.ac.il})}, 
	\and Holger Wendland\thanks{Faculty of Mathematics, Physics and Computer Sciences, University of Bayreuth, Germany (\email{holger.wendland@uni-bayreuth.de})}}
\newcommand{\Log}[2]{\operatorname{Log}_{#1}\left(#2 \right)}
\newcommand{\Exp}[2]{\operatorname{Exp}_{#1}\left(#2 \right)}
\newcommand{\rest}[2]{{ \left. #1 \right|_{#2} }}
\setlist[enumerate]{leftmargin=.5in}
\setlist[itemize]{leftmargin=.5in}
\def\R{\mathbb{R}}
\def\N{\mathbb{N}}
\def\cM{\mathcal{M}}
\def\SPD{\operatorname{SPD}}
\def\SO{\operatorname{SO}}
\def\00{\mathbf{0}}
\def\11{\mathbf{1}}
\def\22{\mathbf{2}}
\def\33{\mathbf{3}}
\def\44{\mathbf{4}}
\def\55{\mathbf{5}}
\def\66{\mathbf{6}}
\def\77{\mathbf{7}}
\def\88{\mathbf{8}}
\def\99{\mathbf{9}}
\def\bbeta {{\boldsymbol{\beta}}}
\DeclareMathOperator*{\argmin}{arg\,min}
\DeclareMathOperator{\supp}{supp}
\newcommand{\rev}[1]{{\color{black}{#1}}}
\newcommand{\hw}[1]{{\color{black}{#1}}}
\title{On multiscale quasi-interpolation of scattered \\ scalar- and manifold-valued functions}
\author{Nir Sharon, Rafael Sherbu Cohen, Holger Wendland}
\date{}
\begin{document}


\maketitle

\begin{abstract}
We address the problem of approximating an unknown function from its
discrete samples given at arbitrarily scattered sites. This problem is
essential in numerical sciences, where modern applications also
highlight the need for a solution to the case of functions with
manifold values. In this paper, we introduce and analyze a combination
of kernel-based quasi-interpolation and multiscale approximations for
both scalar- and manifold-valued functions. While quasi-interpolation
provides a powerful tool for approximation problems if the data is
defined on infinite grids, the situation is more complicated when it
comes to scattered data. \hw{Here, higher-order quasi-interpolation
  schemes either require derivative information or become numerically
  unstable.} Hence, this paper \rev{principally studies} the
improvement achieved by combining quasi-interpolation with a
multiscale technique. The main contributions of this paper are as
follows. First, we introduce the multiscale quasi-interpolation
technique for scalar-valued functions. Second, we show how this
technique can be carried over \rev{using moving least-squares
  operators} to the manifold-valued setting. Third, we give a
mathematical proof that converging quasi-interpolation will also lead
to converging multiscale quasi-interpolation. Fourth, we provide ample
numerical evidence that multiscale quasi-interpolation has superior
convergence to quasi-interpolation. In addition, we will provide
examples showing that the multiscale quasi-interpolation approach
offers a powerful tool for many data analysis tasks, such as denoising
and anomaly detection. It is especially attractive for cases of
massive data points and high dimensionality.   
\end{abstract}

%

\section{Introduction}

We consider the problem of approximating an unknown function from its discrete samples. This classical problem is at the backbone of many scientific and engineering questions. For example, the samples could represent a physical quantity, a part of a biological system, or a simulation process. As it turns out, the scattered data sites, especially when dealing with \rev{a large amount of data points of high dimensionality}, make this reconstruction problem extremely challenging. \rev{In} this paper, we are interested in approximation methods that allow us to approximate scalar-valued functions and functions that take their values in manifolds. We start by describing the former.

We define our scalar-valued approximation problem as follows. Let $X=\{ x_{j} :
j\in J\}\subseteq \Omega \subseteq \R^{d}$ be data sites and $\{ v_{j}
\}_{j \in J} \subseteq \R$ be the corresponding data values. Here, $J$
is a possibly infinite index set and $\Omega \subseteq \R^{d}$ is the
region of interest. We assume that the data values are samples,
possibly in the presence of noise, of an unknown smooth enough
function $f$, that is $v_{j}=f(x_{j})+\varepsilon_{j}$, where
$\varepsilon_{j}$ is the noise term. Under these settings, a standard
approach is to use a kernel $K \colon \Omega \times \Omega
\rightarrow \R$ and its induced approximant, 
\begin{equation} \label{eqn:kernel_app} 
s(x)=\sum_{j \in J} \alpha_{j} K(x, x_j), \qquad x \in \Omega.
\end{equation}
Often, the kernel is translation invariant or even radial, meaning it
has either the form $K(x,y)=\Phi(x-y)= \phi(\norm{x-y}_2)$ with
$\Phi:\R^d\to\R$ and $\phi \colon
[0,\infty)\to\R$, \hw{respectively, where $\norm{\cdot}_2$} is the Euclidean norm in
  $\R^{d}$.

  Determining the coefficients $\alpha_j$ is then the main
  barrier in constructing the approximation. A natural way to
  determine the coefficients of $s$ 
of~\eqref{eqn:kernel_app} is by interpolation, which requires $s$ to
satisfy $s(x_j)=v_{j}$, $j\in J$. \hw{If $J$ is finite, this leads to
  a linear system $A\alpha=v$ with matrix $A=(K(x_i,x_j))\in\R^{N\times N}$ with
  $N=|J|$. Such systems are often full and hence become numerically too
  expensive to solve for larger $N$. A possible remedy to this is to
  assume that $K$ is translation-invariant,
  i.e. of the form $K(x,y)=\Phi(x-y)$ and that $\Phi$ has compact
  support. Then, for each $1\le i\le N$, the number of data sites that
  fit inside the support of $\Phi(x_i-\cdot)$ determines the number of
  non-zero entries in row $i$ of the matrix $A$. Consequently, if the
  support radius of $\Phi$ is chosen such that the number of non-zero
  entries per row is small, then we essentially have a sparse system,
  which is also known to be well-conditioned. For quasi-uniform data
  sets this is possible by choosing the support radius proportional to
  the so-called mesh-norm or fill distance of the data set. Unfortunately, it is
  also known that adding more and more points to the system while
  keeping the number of non-zero entries fixed by scaling does not
  lead to a converging process. For convergence it is necessary to
  keep the support radius fixed, meaning that eventually the matrix
  $A$ fills up again and the advantage of sparsity and
  well-conditioning is lost.} 

A remedy to this problem has been suggested
in~\cite{schaback1995creating, floater1996multistep, chen2002multilevel}, and the
convergence has first been proven in~\cite{gia2010multiscale,
  le2017zooming, le2014data, wendland2010multiscale}. \hw{Recent
  publication comprise for example \cite{Hubbert-Levesley-17-1,
    Usta-Levesley-18-1, Wendland-17-1}}. The idea is
based on the assumption of a sequence of
increasingly denser, usually nested, data sets $X_1, X_2,\ldots,$ and
a simple residual correction scheme. The process starts when the
target function is approximated on the coarsest grid $X_1$. Next, the
error of the first step is \hw{computed} on $X_2$ \hw{and then
  approximated} using a smaller
support radius of \hw{$\Phi$} and so on. This approach has not only the
advantages of sparse, well-conditioned interpolation matrices in each
step and fast, stable evaluations; it also allows us to capture
different scales, if present, in the target function. We refer to this
approach as \textit{multiscale}.

\hw{Another computational alluring}
alternative to interpolation is {\em quasi-interpolation}, where the
coefficients of $s$ of~\eqref{eqn:kernel_app} are set as the values of
samples, $\alpha_{j} = v_{j}$. The quasi-interpolation concept
\hw{often also requires polynomial reproduction of a certain
  degree. It }is used
successfully in various areas and is well studied, particularly in
spline spaces, see, e.g.,~\cite[Chapter
  12]{de1990quasiinterpolants} \hw{and
  \cite{Buhmann-Jaeger-22-1}}. \hw{Quasi-interpolation on scattered 
  data is also possible, see for example ~\cite{Buhmann-etal-95-1,feng2009shape,
    wang2010kind}. One popular
  multivariate technique is given by {\em
    moving least-squares}, see for example
  \cite{Levin-98-1,wendland2001local}. However, particular for higher
  approximation orders the numerical stability depends heavily on the
  geometry of the data sites and is  often practically not achievable.}

Here, we suggest using quasi-interpolation in a residual correction
scheme, that is, in a multiscale fashion. Our quasi-interpolation
consists of a compactly supported radial basis function
(RBF)~\cite{wendland1998error}. So the resulted scheme provides an
appealing computational technique to process massively large datasets
in high dimensions. While similar approaches were \rev{mainly} studied numerically
and in slightly different settings~\cite{fasshauer2007iterated,
  ling2004univariate, cao2015spherical}, we suggest both numerical treatment and
analytic consideration. \hw{Moreover, one of the main contributions of
  this paper is to carry the concepts over to manifold-valued functions.}  

Manifold-valued functions are of the form $F \colon \Omega \to \cM$,
where $\cM$ is a Riemannian manifold. The simplicity and efficiency of
the quasi-interpolation when equipped with a compactly supported
$\Phi$, makes this method particularly appealing for approximating
manifold-valued functions, see, e.g., \cite{grohs2017scattered}. The
manifold expresses both a global nonlinear structure together with
local, constrained, high-dimensional elements. However, classical
computational methods for approximation cannot cope with
manifold-valued functions due to manifolds' non-linearity, and even
fundamental tasks like integration, interpolation, and regression
become challenging, see, e.g.,~\cite{blanes2017concise,
  iserles2000lie, zeilmann2020geometric}.   


The problem of approximating functions with manifold values has risen
in various research areas, ranging from signal
processing~\cite{bouchard2018riemannian} and modern
statistics~\cite{petersen2019frechet} to essential applications such
as brain networks and autism
classification~\cite{fletcher2018riemannian}, structural dynamics and
its application for aerodynamic problem
solving~\cite{amsallem2009method, lieu2006reduced}, to name a few. For
example, in reduced-order models for
simulations~\cite{benner2015survey, guo2018reduced} they drastically
decrease the calculation time of simulating processes using
interpolation of nonlinear structures at scattered locations and
within high levels of accuracy~\cite{rama2018towards}.  

In this paper, we show how to use our multiscale quasi-interpolation
approach for manifold-values approximation based on the Riemannian
center of mass. First, we provide a rigorous theoretical discussion,
followed by a comprehensive numerical study that includes both the
fine details of implementation together with an illustration of the
multiscale approximation over scattered manifold data and its
application. The alluring computational nature of the
quasi-interpolation multiscale becomes even more essential for
approximating manifold-values functions as the dimensionality and
complexity of the manifold must be considered. In the numerical part,
we provide comparison tests between the direct quasi-interpolation
approximation and the multiscale approach and show its attractivity
for an application of manifold-data processing. 

The paper is organized as follows. Section~\ref{sec:notation} is
dedicated to introducing essential notation, definitions, and
background. In Section~\ref{sec: error_est}, we prove new error bounds
for quasi-interpolation of scalar-valued functions. Then,
Section~\ref{sec: multiscale} discusses the combination of
quasi-interpolation and 
multiscale techniques in the scalar-valued setting. Here, we prove a
very preliminary result, which shows that multiscale
quasi-interpolation converges at roughly the same rate as
quasi-interpolation itself, if the latter converges.  In Section~\ref{sec:
  transition_to_manifold}, we show how the construction of the
multiscale quasi-interpolation method is formed for manifold data and
provide the algorithm and a 
theoretical discussion. We summarize the manuscript with the numerical
results that are given in Section~\ref{sec: numerical}. We first
compare the performance of the multiscale quasi-interpolation to
standard quasi-interpolation. Here, the numerical examples show two
extraordinary 
features of multiscale quasi-interpolation. On the one hand they show
that non-converging quasi-interpolation can become convergent if combined
with the multiscale technique. On the other hand, they show that
the speed of convergence of quasi-interpolation can be improved by
combining quasi-interpolation  with the multiscale approach. There is some
theoretical evidence for the first case in \cite{Franz-Wendland-22-1},
though the situation described there does not apply to our scattered
data problems and hence needs further theoretical backing. The same is
true for the latter case. While it is easy to see that multiscale
quasi-interpolation must converge if quasi-interpolation converges,
see Section \ref{sec: multiscale}, there is still no rigorous proof
for this yet.
Finally, we study other aspects that highlight the
advantages of the multiscale approach. Finally, we present the
multiscale approximation for manifold data, including its application
for denoising a field of rotations that is contaminated with noise.  

\section{Preliminaries} \label{sec:notation}

We call an open, nonempty set
$\Omega\subseteq\R^d$ a {\em domain}. Noting, however, that the term
domain often also includes connectivity. We will look at classes of
differentiable functions.  

\begin{definition} Let $\Omega\subseteq\R^d$ be a bounded domain. Let
  $k\in\N_0$. The spaces of $k$-times continuously differentiable
  functions are defined as
  \begin{eqnarray*}
    C^k(\Omega)&:=& \{u\colon \Omega\to\R : D^\alpha \, \, u\in C(\Omega),
    |\alpha|\le k\},\\
    C^k(\overline{\Omega})&:=& \{u\in C^k(\Omega) : D^\alpha u \mbox{
      has a continuous extension to $\partial\Omega$ }, |\alpha|\le
    k\}.
  \end{eqnarray*}
 On the latter space we define the norm 
   \begin{equation} \label{eqn:smoothness_desc_real_values}
       \|u\|_{C^{\hw{k}}(\overline{\Omega})}:=\max_{|\alpha|\le \hw{k}}
  \sup_{x\in\overline{\Omega}} |D^\alpha u(x)|, \qquad u\in
  C^{\hw{k}}(\overline{\Omega}).
   \end{equation}
\end{definition}

We will first collect the necessary material on quasi-interpolation. In the following, $\pi_m(\R^d)$ denotes the space of all $d$-variate polynomials of (total) degree less than or equal to $m\in\N_0$. For quasi-interpolation, we use the moving least squares method.
\begin{definition}
  Let $X=\{x_1,\ldots,x_N\}\subseteq\Omega\subseteq\R^d$ be
  given. Then, the {\em Moving Least-Squares} (MLS) approximation $Q_X(f)$ of degree
  $m\in\N_0$ to a function $f\in C(\Omega)$ is defined as follows. We
  choose {\em weight functions} $w_i:\Omega\to\R$, $1\le i\le
  N$. For each $x\in\Omega$ we set $Q_X(f)(x):=p^*(x)$, where
  $p^*\in\pi_m(\R^d)$ is the solution of
\begin{equation}
    \label{eq: moving least squares}
  \min\left\{\sum_{i=1}^N |f(x_i)-p(x_i)|^2 w_i(x) :
  p\in\pi_m(\R^d)\right\}.
  \end{equation}
\end{definition}

Next, we will assume that the weight functions $w_i:\Omega\to\R$ are
compactly supported and introduce the index sets
\[
I(x):=\{j\in\{1,\ldots,N\} : w_j(x)\ne 0\}.
\]

We present the quasi-interpolation operator and its polynomial reproduction property.

\begin{definition} \label{def:poly_rep}
  Let $Q_X$ be a sample-based approximation functional of the form,
    \begin{equation}\label{eq: general_scheme}
    Q_X(f) = \sum_{i=1}^N f(x_i) a_i,
  \end{equation}
  where $a_i$ are weight functions. We say that $Q_X$
  \textit{reproduces polynomials up to degree $m$} if for all
  $p\in\pi_m(\R^d)$, we have $Q_X(p)=p$, i.e.
    \begin{equation}\label{03-mlspr}
    p(x) =  \sum_{i=1}^N p(x_i)a_i(x), \qquad x\in\Omega.
    \end{equation}
\end{definition}

Existence and certain properties of the MLS approximant are summarized
in the following theorem. Its proof can be found, for example, in
\cite{wendland_2004}. In its formulation we use the concept of 
$\pi_m(\R^d)$-unisolvent sets. A set $X$ is called
$\pi_m(\R^d)$-unisolvent, if the only function $p\in\pi_m(\R^d)$ that
vanishes on all points from $X$ is the zero function.

\begin{theorem}\label{thm:mlsprop}
 Let $X=\{x_1,\ldots,x_N\}\subseteq\Omega\subseteq\R^d$. Let
  $w_i\in C(\Omega)$, $1\le i\le N$, be non-negative.  Assume that for each $x\in\Omega$
  the set $X(x)=\{x_i : i\in I(x)\}$ is
  $\pi_m(\R^d)$-unisolvent. Then, the MLS approximation $Q_X(f)$ of \eqref{eq: general_scheme} is
  well-defined for every $f\in C(\Omega)$, and \hw{$Q_X$ reproduces
    polynomials up to} degree $m$, see
  Definition~\ref{def:poly_rep}. Moreover, there are unique 
  functions $a_1,\ldots,a_N$ for \eqref{eq: general_scheme}, that can be written as
  \begin{equation}\label{03-mlsbasis}
  a_i(x) = w_i(x) \sum_{\ell=1}^Q \lambda_\ell(x) p_\ell(x_i),
  \qquad 1\le i\le N,
  \end{equation}
  with a basis $\{p_1,\ldots,p_Q\}$ of $\pi_m(\R^d)$ and certain
    values $\lambda_1(x),\ldots, \lambda_Q(x)$, which are
    determined by the linear system
    \begin{equation}\label{03-mlssystem}
    \sum_{j=1}^Q \left(\sum_{i=1}^N w_i(x)
    p_j(x_i)p_\ell(x_i)\right)\lambda_j(x) = p_\ell(x), \qquad
    1\le \ell\le Q.
    \end{equation}
\end{theorem}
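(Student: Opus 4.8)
The plan is to prove Theorem~\ref{thm:mlsprop} by reducing the local minimization problem~\eqref{eq: moving least squares} at a fixed $x\in\Omega$ to a weighted least-squares problem on the finite-dimensional space $\pi_m(\R^d)$, and then exhibiting the normal equations explicitly in terms of a fixed basis. First I would fix $x\in\Omega$ and restrict attention to the index set $I(x)$; since the weights $w_i$ are non-negative and $w_i(x)=0$ for $i\notin I(x)$, only the points $X(x)=\{x_i : i\in I(x)\}$ contribute to the objective in~\eqref{eq: moving least squares}. I would introduce the bilinear form
\begin{equation}\label{eq:planbilinear}
\langle p,q\rangle_x := \sum_{i\in I(x)} w_i(x)\, p(x_i) q(x_i), \qquad p,q\in\pi_m(\R^d),
\end{equation}
and observe that it is a genuine inner product on $\pi_m(\R^d)$ precisely because $X(x)$ is $\pi_m(\R^d)$-unisolvent: if $\langle p,p\rangle_x = 0$ then, as all $w_i(x)>0$ for $i\in I(x)$, we get $p(x_i)=0$ for every $i\in I(x)$, hence $p\equiv 0$ by unisolvency. (One should note that unisolvency forces $|I(x)|\ge Q=\dim\pi_m(\R^d)$, which is implicitly part of the hypothesis.) With this inner product, $p\mapsto \sum_i w_i(x)|f(x_i)-p(x_i)|^2 = \|f-p\|_x^2$ is the squared distance from the ``data vector'' to the subspace $\pi_m(\R^d)$ in a Euclidean geometry, so a unique minimizer $p^*=p^*_x$ exists by the projection theorem in finite dimensions. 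This gives well-definedness of $Q_X(f)(x):=p^*_x(x)$ for every $f\in C(\Omega)$.

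Next I would derive the normal equations. Writing $p^* = \sum_{\ell=1}^Q \lambda_\ell(x) p_\ell$ in terms of a fixed basis $\{p_1,\ldots,p_Q\}$ of $\pi_m(\R^d)$, the orthogonality characterization $\langle f-p^*, p_\ell\rangle_x = 0$ for $1\le\ell\le Q$ becomes
\begin{equation}\label{eq:plannormal}
\sum_{j=1}^Q \Bigl(\sum_{i=1}^N w_i(x) p_j(x_i) p_\ell(x_i)\Bigr)\lambda_j(x) = \sum_{i=1}^N w_i(x) f(x_i) p_\ell(x_i), \qquad 1\le\ell\le Q,
\end{equation}
where the sums may be extended from $I(x)$ to $\{1,\ldots,N\}$ harmlessly since $w_i(x)=0$ otherwise. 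The Gram matrix $G(x)=(\langle p_j,p_\ell\rangle_x)_{j,\ell}$ is symmetric positive definite by the inner-product property just established, so~\eqref{eq:plannormal} has a unique solution $\lambda(x)$. Evaluating $p^*$ at $x$ yields $Q_X(f)(x)=\sum_{\ell=1}^Q\lambda_\ell(x)p_\ell(x)$, and substituting the solution of~\eqref{eq:plannormal} and collecting the coefficient of each $f(x_i)$ gives the representation~\eqref{eq: general_scheme} with
\begin{equation}\label{eq:planai}
a_i(x) = w_i(x)\sum_{\ell=1}^Q \mu_\ell(x) p_\ell(x_i),
\end{equation}
where $\mu(x)=G(x)^{-1}(p_1(x),\ldots,p_Q(x))^{\mathsf T}$; renaming $\mu_\ell=\lambda_\ell$ (the right-hand side $p_\ell(x)$ of~\eqref{03-mlssystem} replaces $\sum_i w_i f(x_i) p_\ell(x_i)$, which is exactly the substitution needed to get the $a_i$ rather than $Q_X(f)$ itself) produces~\eqref{03-mlsbasis} and~\eqref{03-mlssystem}.

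For polynomial reproduction, I would take $f=p\in\pi_m(\R^d)$ and note that then $p$ itself lies in the approximation space and gives objective value zero in~\eqref{eq: moving least squares}; by uniqueness of the minimizer, $p^*_x=p$, hence $Q_X(p)(x)=p(x)$ for all $x$, i.e.~\eqref{03-mlspr}. Uniqueness of the functions $a_1,\ldots,a_N$ in the representation~\eqref{eq: general_scheme} is the one point requiring a small argument: a priori $Q_X(f)=\sum_i f(x_i)a_i$ determines the $a_i$ only through their action on sampled values, but fixing $x$ and letting $f$ range over functions interpolating the $N$ standard unit vectors at the $x_i$ (possible since $C(\Omega)$ is rich enough — e.g. Lagrange-type bumps) forces $a_i(x)$ to equal $Q_X$ applied to such an $f$, pinning down each $a_i(x)$ uniquely; then~\eqref{eq:planai} is simply its closed form. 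The main obstacle is not any single deep step but rather bookkeeping care: keeping track of which sums run over $I(x)$ versus $\{1,\ldots,N\}$, verifying that unisolvency is genuinely what makes $\langle\cdot,\cdot\rangle_x$ positive definite (and hence that $|I(x)|\ge Q$ is automatically implied), and matching the right-hand side of the normal equations~\eqref{eq:plannormal} to the claimed system~\eqref{03-mlssystem} so that the $a_i$ — not $Q_X(f)$ — come out. None of these is hard, but they are the places where the argument could go wrong if written carelessly.
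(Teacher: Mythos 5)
Your argument is correct. Note that the paper itself gives no proof of Theorem~\ref{thm:mlsprop}; it simply cites \cite{wendland_2004}. Your route --- turning the weighted sum into the inner product $\langle p,q\rangle_x=\sum_{i\in I(x)}w_i(x)p(x_i)q(x_i)$, whose positive definiteness is exactly the unisolvency hypothesis, invoking the projection theorem, and then passing from the primal normal equations (right-hand side $\sum_i w_i(x)f(x_i)p_\ell(x_i)$) to the system \eqref{03-mlssystem} (right-hand side $p_\ell(x)$) via symmetry of the Gram matrix --- is sound, and you correctly flag the one place where care is needed, namely that the $\lambda_\ell$ of \eqref{03-mlssystem} are the coefficients of the vector $G(x)^{-1}(p_1(x),\dots,p_Q(x))^{\mathsf T}$ rather than the coefficients of the minimizing polynomial. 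The reproduction and uniqueness arguments (zero objective value plus unisolvency, and bump functions realizing the unit vectors $\delta_{ij}$ at the nodes) are also fine. For comparison, the treatment in \cite{wendland_2004} reaches \eqref{03-mlssystem} by the dual (Backus--Gilbert) route: minimize $\frac{1}{2}\sum_i a_i(x)^2/w_i(x)$ subject to the reproduction constraints $\sum_i a_i(x)p_\ell(x_i)=p_\ell(x)$, so that \eqref{03-mlssystem} appears directly as the Lagrange-multiplier system and \eqref{03-mlsbasis} as the stationarity condition; your primal derivation obtains the same formulas with one extra transposition step. Either way the content is identical, so your proposal is an acceptable proof of the stated theorem.
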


We will assume that the weight functions of~\eqref{eq: moving least squares} are
of the form
\begin{equation}\label{03-w}
w_i(x) = \Phi\left(\frac{x-x_i}{\delta_i}\right) ,
\end{equation}
with a given function $\Phi:\R^d\to\R$. Typically, this function is
either a {\em radial} function, i.e. a function of the form
\[
\Phi(x) = \phi(\|x\|_2), \qquad \phi:[0,\infty)\to\R,
\]
or of tensor product type, i.e.
\[
\Phi(x) = \prod_{j=1}^d \phi_j(\chi_j), \qquad \phi_j:\R\to\R, \quad
x=(\chi_1,\ldots,\chi_d). 
\]
For us this particular form is not important. We will, however, make
the following assumption.

\begin{assumption}\label{03-phias}
For $X=\{x_1,\ldots,x_N\}$ the weight functions $w_i:\R^d\to\R$
are given by (\ref{03-w}) with $\delta_i>0$ and a function $\Phi\in C^r(\R^d)$ having
compact support $\supp\Phi = \overline{\mathcal{B}(\00,1)}$ and being positive
on $\mathcal{B}(\00,1)=\{x\in\R^d : \|x\|_2<1\}$.
\end{assumption}

If the data set $X$ is chosen\textit{ quasi-uniform}, i.e. if the fill distance
$h_{X,\Omega}$ and the separation radius $q_X$, given by
\begin{equation} \label{eqn:fill_dist_separation_rad}
    h_{X,\Omega}=\sup_{x\in\Omega}\min_{1\le j\le N} \|x-x_j\|_2  ,
\qquad
q_X = \frac{1}{2}\min_{j\ne i} \|x_j-x_i\|_2 
\end{equation}
satisfy $q_X\le h_{X,\Omega}\le c_q q_X$ with a small constant $c_q\ge
1$  \hw{then it is usual to use the same support radius
  $\delta_i=\delta>0$ for all weight functions $w_i$. Moreover, this
  support radius can then be chosen proportional to 
$h_{X,\Omega}$.}

\begin{lemma} Let $X\subseteq\Omega$ be a quasi-uniform data set. Let
  the weights satisfy Assumption \ref{03-phias}, where the support
  radii are chosen as $\delta_i=c_\delta h_{X,\Omega}$ with a
  sufficiently large constant $c_\delta>0$. Then, the sets $X(x)$
  are $\pi_m(\R^d)$-unisolvent and the functions $a_i$ from Theorem
  \ref{thm:mlsprop} belong to $C^r(\overline{\Omega})$ and satisfy
  \[
 | D^\alpha a_i (x)| \le C h_{X,\Omega}^{-|\alpha|}, \qquad
 x\in\Omega, \quad 1\le i\le N,
 \]
\hw{for $\alpha\in\N_0^d$ with $|\alpha|\le r$. Here, $C>0$ is a
  constant }independent of $X$, $x$ and $a_i$.
\end{lemma}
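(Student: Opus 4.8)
The plan is to establish the three assertions — $\pi_m(\R^d)$-unisolvency of the local sets $X(x)$, $C^r$-regularity of the MLS basis functions $a_i$, and the derivative bounds — by combining a geometric covering/packing argument with a scaling (Markov-type) argument on a fixed reference configuration. Throughout, write $h := h_{X,\Omega}$ and $\delta := c_\delta h$, so that $\supp w_i = \overline{\mathcal{B}(x_i,\delta)}$ by Assumption~\ref{03-phias} and~\eqref{03-w}.

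First I would prove the unisolvency. Fix $x\in\Omega$. Because $h$ is the fill distance, the ball $\mathcal{B}(x, \delta)$ with $\delta = c_\delta h$ contains a point of $X$ within distance $h$ of any point of $\mathcal{B}(x,\delta/2)$ (say), and using quasi-uniformity one shows that $X(x) = \{x_i : x_i\in\mathcal{B}(x,\delta)\}$ contains a rescaled copy of a $\pi_m(\R^d)$-unisolvent set: concretely, for $c_\delta$ large enough there is a fixed finite $\pi_m(\R^d)$-unisolvent set $\Xi\subseteq\mathcal{B}(\00,1)$ and, for each $\xi\in\Xi$, a data site $x_i\in X(x)$ with $\|x_i - (x+\delta\xi)\|_2$ small relative to $\delta$; a standard perturbation argument (the Vandermonde-type matrix for a unisolvent set is invertible, hence small perturbations remain unisolvent) then gives that $X(x)$ itself is $\pi_m(\R^d)$-unisolvent. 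This is essentially the norming-set argument, e.g. as in \cite{wendland_2004}. Having this, Theorem~\ref{thm:mlsprop} applies pointwise and delivers the representation~\eqref{03-mlsbasis}–\eqref{03-mlssystem}.

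Next, the regularity and the bounds. The key is to read off the dependence on $h$ from the linear system~\eqref{03-mlssystem}. Rescale: substitute $x = x_0 + \delta y$ locally and work with the shifted/scaled monomial basis $p_\ell^{(x)}(z) = \tilde p_\ell((z-x)/\delta)$ where $\{\tilde p_\ell\}$ is the fixed monomial basis of $\pi_m(\R^d)$ on $\mathcal{B}(\00,1)$. With this basis the Gram matrix $G(x) = \big(\sum_i w_i(x) p_j^{(x)}(x_i) p_\ell^{(x)}(x_i)\big)_{j,\ell}$ has entries of order $1$ (not of order $h$-powers), because $(x_i - x)/\delta$ ranges over a bounded set and the number of active indices $|I(x)|$ is bounded by a constant (again by quasi-uniformity: a packing argument bounds $|I(x)| \le C_q$). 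The crucial point is a \emph{uniform lower bound} on the smallest eigenvalue of $G(x)$: this follows from the unisolvency above together with compactness — since $\Phi$ is positive on $\mathcal{B}(\00,1)$ and the configuration $\{(x_i-x)/\delta\}$ always contains a near-copy of the fixed unisolvent set $\Xi$, one gets $\lambda_{\min}(G(x)) \ge c > 0$ with $c$ independent of $x$ and $X$. Then $\lambda_\ell(x) = (G(x)^{-1} b(x))_\ell$ with $b(x) = (p_\ell^{(x)}(x))_\ell = (\tilde p_\ell(\00))_\ell$ a fixed bounded vector; $C^r$-dependence of $\lambda_\ell$ on $x$ comes from $\Phi\in C^r$, the smoothness of matrix inversion, and the chain rule, while each $x$-derivative of $w_i(x) = \Phi((x-x_i)/\delta)$ or of $p_\ell^{(x)}(x_i) = \tilde p_\ell((x_i-x)/\delta)$ produces a factor $\delta^{-1} \sim h^{-1}$. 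Differentiating~\eqref{03-mlsbasis} via the product and chain rules and counting these factors yields $|D^\alpha a_i(x)| \le C h^{-|\alpha|}$ for $|\alpha|\le r$, with $a_i\in C^r(\overline\Omega)$.

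The main obstacle is the uniform spectral bound on the rescaled Gram matrices $G(x)$: everything else (covering/packing counts, chain-rule bookkeeping, smoothness of inversion) is routine once that is in hand. The right way to get it is a compactness argument on the space of admissible local configurations — the set of tuples $\{(x_i-x)/\delta : i\in I(x)\}$ lying in $\overline{\mathcal{B}(\00,1)}$, bounded in cardinality, and separated by at least $q_X/\delta \ge 1/(2c_\delta c_q) =: \sigma > 0$ — so that $\lambda_{\min}(G(\cdot))$ is a positive continuous function on a compact set and hence bounded below uniformly; this is where the quasi-uniformity hypothesis and the lower bound on $\Phi$ on the open ball are both genuinely used, and it is also implicitly where $c_\delta$ must be taken "sufficiently large" so that every admissible local configuration is $\pi_m(\R^d)$-unisolvent.
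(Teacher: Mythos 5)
Your plan is correct and is essentially the argument of the references the paper itself cites for this lemma (Melenk--B\"ohmer-type scaling analysis and the norming-set machinery in \cite{wendland_2004, melenk2005approximation, mirzaei2015analysis}): local $\pi_m(\R^d)$-unisolvency for sufficiently large $c_\delta$, a shifted/scaled polynomial basis making the weighted Gram matrix uniformly well-conditioned, and chain-rule bookkeeping that produces one factor $\delta^{-1}\sim h_{X,\Omega}^{-1}$ per derivative. The one point to treat with care --- which the cited sources handle via an interior cone condition on $\Omega$ --- is that near $\partial\Omega$ the reference points $x+\delta\xi$ must lie inside $\Omega$ for the fill-distance argument to supply nearby data sites, and the unisolvent configuration must sit in a compact subset of the open unit ball so that the weights $w_i(x)=\Phi((x-x_i)/\delta)$ on it stay bounded away from zero.
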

\begin{proof}
  This is proven in \cite{melenk2005approximation} and \cite{mirzaei2015analysis}.
\end{proof}

\section{Error estimates for quasi-interpolation} \label{sec: error_est}

We will now derive error estimates of the quasi-interpolation
process, if the target functions are from $C^k(\overline{\Omega})$. The
proofs are very similar to the proofs for Sobolev spaces in
\cite{melenk2005approximation, mirzaei2015analysis}.

\begin{remark}
In the following proof we require that the target function $f$ is defined
on the line connecting two arbitrary points $x, y\in\Omega$. This
can be achieved by either assuming that $\Omega$ 
is convex or that $f$ is defined on all of $\R^d$. \hw{For more 
  general domains $\Omega\subseteq\R^d$ it is possible to extend
  functions from  $\Omega$ to all of $\R^d$, provided $\Omega$ has a
  sufficiently smooth boundary, see for example
  \cite{Whitney-34-1}. Hence, for practical purposes it is no
  significant restriction to stick to convexity in the following theorem.}

  \end{remark}

Recall that $m\in\N_0$ denotes the degree of the polynomials which are
reproduced and that $r\in\N_0$ is the smoothness of the weights and
$k\in\N$ is the smoothness of the target function.

\begin{theorem}
  Let $\Omega\subseteq\R^d$ be a bounded, convex domain.
  Under all the above assumptions there is a constant $C>0$ such that 
  \[
  \|f-Q_X(f)\|_{C^\ell(\overline{\Omega})} \le C h_{X,\Omega}^{\min\{m+1,k\}-\ell}
  \|f\|_{C^{\min\{m+1,k\}}(\overline{\Omega})}
  \]
  for all $f\in C^k(\R^d)$ and all $0\le \ell \le \min\{r,m+1,k\}$.
\end{theorem}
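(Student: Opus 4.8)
The plan is to follow the standard moving least-squares error analysis (as in \cite{melenk2005approximation, mirzaei2015analysis}), but replace the Sobolev-space Taylor estimates with their $C^k$ analogues. Fix $x\in\Omega$ and set $s:=\min\{m+1,k\}$. The starting point is the polynomial-reproduction property~\eqref{03-mlspr}: since $Q_X$ reproduces $\pi_m(\R^d)\supseteq\pi_{s-1}(\R^d)$, for any $p\in\pi_{s-1}(\R^d)$ we have
\[
f(x)-Q_X(f)(x) = (f(x)-p(x)) - \sum_{i\in I(x)} (f(x_i)-p(x_i)) a_i(x).
\]
I would choose $p=T_x^{s-1}f$, the Taylor polynomial of $f$ at $x$ of degree $s-1$. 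Then for each $x_i$ with $i\in I(x)$ the Taylor remainder in $C^k$ form gives $|f(x_i)-p(x_i)|\le C\|x-x_i\|_2^{s}\,\|f\|_{C^{s}(\overline{\Omega})}$; this is where convexity of $\Omega$ (or extension to $\R^d$) is used, so that the segment $[x,x_i]$ lies in the domain. Since $w_i(x)\ne 0$ forces $\|x-x_i\|_2\le\delta_i = c_\delta h_{X,\Omega}$, each such factor is $O(h_{X,\Omega}^{s})$.

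Next I would differentiate: for a multi-index $\alpha$ with $|\alpha|\le\ell$, apply $D^\alpha$ to the identity above. The term $D^\alpha(f-p)(x)$ is again a Taylor-type remainder and is bounded by $C h_{X,\Omega}^{s-|\alpha|}\|f\|_{C^{s}(\overline{\Omega})}$ (here one needs $|\alpha|\le s$, hence the restriction $\ell\le\min\{r,m+1,k\}$; the $r$ comes from needing $a_i\in C^r$). For the sum, the Leibniz rule distributes $D^\alpha$ over $(f(x_i)-p(x_i))a_i(x)$. Crucially $f(x_i)-p(x_i)$ is a \emph{constant} in $x$ (only $a_i$ depends on $x$), so $D^\alpha[(f(x_i)-p(x_i))a_i] = (f(x_i)-p(x_i)) D^\alpha a_i(x)$. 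Then invoke the Lemma: $|D^\alpha a_i(x)|\le C h_{X,\Omega}^{-|\alpha|}$. Combining,
\[
\left| D^\alpha\!\sum_{i\in I(x)}(f(x_i)-p(x_i))a_i(x)\right|
\le C h_{X,\Omega}^{s}\|f\|_{C^{s}(\overline{\Omega})}\, h_{X,\Omega}^{-|\alpha|}\,\#I(x).
\]

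The last ingredient is that $\#I(x)$ is bounded by a constant independent of $X$ and $x$: because $X$ is quasi-uniform and the supports have radius proportional to $h_{X,\Omega}$, a volume/packing argument (balls of radius $q_X\ge h_{X,\Omega}/c_q$ around the $x_i\in X(x)$ are disjoint and contained in a fixed multiple of $\mathcal B(x,h_{X,\Omega})$) gives $\#I(x)\le C$. Putting the three bounds together yields $|D^\alpha(f-Q_X(f))(x)|\le C h_{X,\Omega}^{s-|\alpha|}\|f\|_{C^{s}(\overline{\Omega})}$, and taking the maximum over $|\alpha|\le\ell$ and the supremum over $x\in\overline\Omega$ (using the continuous extension of the $a_i$ and of $D^\alpha f$ to the closure) gives the claimed $C^\ell$ bound. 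I expect the only genuinely delicate point to be bookkeeping the case distinction $s=\min\{m+1,k\}$ correctly — when $k\le m$ one only has $k$ derivatives of $f$ available, so the Taylor polynomial has degree $k-1<m$ and one must check that reproduction of degree $m$ still covers it and that no derivative of order exceeding $k$ is ever taken; the constants themselves are routine and I would not spell them out.
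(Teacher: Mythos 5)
Your proposal is correct and follows essentially the same route as the paper's proof: polynomial reproduction to insert the degree-$\min\{m,k-1\}$ Taylor polynomial about the evaluation point, the Lagrange remainder (via convexity) to bound $f-p$ on $X(x)$, the derivative bounds $|D^\alpha a_i|\le Ch_{X,\Omega}^{-|\alpha|}$ from the Lemma, and the packing bound $\#I(x)\le M$. The only cosmetic difference is that you bound the term $D^\alpha(f-p)$ uniformly by $Ch_{X,\Omega}^{s-|\alpha|}\|f\|_{C^s}$, whereas the paper notes it vanishes exactly for $|\alpha|\le s-1$ and treats $|\alpha|=s$ as a separate boundary case; both are fine.
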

\begin{proof}
  As $X$ is quasi-uniform, there is a constant $M>0$ independent of
  $N$ such that $\# I(x) \le M$ for all $x\in \Omega$. This
  follows as usual from a geometric argument. We have $j\in I(x)$ if
  and only if $\|x-x_j\|_2<\delta_j = c_\delta
  h_{X,\Omega}=:\delta$, i.e. if and only if $x_j\in
  \mathcal{B}(x,\delta)$. This implies 
  \[
  \bigcup_{j\in I(x)} \mathcal{B}(x_j,q_X) \subseteq \mathcal{B}(x,\delta+q_X),
  \]
  and as the balls on the left-hand side are disjoint, we can conclude
  \[
  \# I(x) q_X^d \le (\delta+q_X)^d.
  \]
  Quasi-uniformity $q_X\le h_{X,\Omega}\le c_q q_X$ then gives the
  upper bound
  \[
  \# I(x) \le \left(\frac{\delta}{q_X} + 1\right)^d \le
  \left(c_\delta c_q +1\right)^d=:M.
  \]

  Next, we use the notation $X(x)=\{x_j : j\in I(x)\}$. By the polyonmial reproduction of the moving least-squares process, we have for any $p\in\pi_m(\R^d)$, any $\widehat{x}\in
 \Omega$ and any $\alpha\in\N_0^d$ with $|\alpha|\le \min\{r,k\}$, 
 \begin{eqnarray*}
\lefteqn{   |D^\alpha f(\widehat{x})-D^\alpha Q_X(f)(\widehat{x})| = 
  | D^\alpha f(\widehat{x})-D^\alpha p(\widehat{x}) + D^\alpha p(\widehat{x}) -
  D^\alpha Q_X(f)(\widehat{x})|}   \\
  & \le & |D^\alpha f(\widehat{x})-D^\alpha p(\widehat{x})| + \sum_{j\in I(\widehat{x})}
  | D^\alpha a_j(\widehat{x})| | p(x_j)-f(x_j)|\\
  & \le & |D^\alpha f(\widehat{x}) -D^\alpha p(\widehat{x})| + 
  \|f - p\|_{\ell_\infty({X}(\widehat{x}))} \left(1+\sum_{j\in
   I(\widehat{x})} C_\alpha h_{X,\Omega}^{-|\alpha|}\right) \\
  &\le & |D^\alpha f(\widehat{x}) -D^\alpha p(\widehat{x})| +
  \| f - p\|_{\ell_\infty({X}(\widehat{x}))}  (1+C_\alpha M) 
  h_{X,\Omega}^{-|\alpha|}. 
 \end{eqnarray*}
Next, we choose $p$ as the Taylor polynomial
$T_{\min\{m,k-1\}} f$ of $f$ about $\widehat{x}$, i.e.
\[
p(x) = T_{\min\{m,k-1\}}f(x) = \sum_{|\bbeta|\le \min\{m,k-1\}} \frac{D^\bbeta
  f(\widehat{x})}{\bbeta!}(x-\widehat{x})^\bbeta.
\]
For $|\alpha|\le \min\{m,k-1\}$, it is well-known and shown by a straight-forward
calculation that 
\[ D^\alpha T_{\min\{m,k-1\}} f = T_{\min\{m,k-1\}-|\alpha|} D^\alpha f . \]
This immediately shows for
$|\alpha|\le \min\{m,k-1\}$ that $D^\alpha f(\widehat{x}) =
D^\alpha p (\widehat{x})$ holds. Moreover, as $\Omega$ is convex, 
we can use the Lagrange remainder formula in the form
\[
f(x) -  p(x) = 
\sum_{|\bbeta|=\min\{m+1,k\}}\frac{D^\bbeta f(\xi)}{\bbeta!} (x-\widehat{x})^\bbeta,
\]
with $\xi$ on the line connecting $x$ and $\widehat{x}$. This
shows for $x\in {X}(\widehat{x})$, the bound
\[
|f(x) - p(x)| \le C \delta^{\min\{m+1,k\}}
\|f\|_{C^{\min\{m+1,k\}}(\overline{\Omega})}. 
\]
Combining this with the estimate above, we find for $|\alpha|\le
\min\{r,m,k-1\}$,
\begin{equation}\label{est1}
\|D^\alpha f - D^\alpha Q_X(f)\|_{C(\overline{\Omega})} \le C
h_{X,\Omega}^{\min\{m+1,k\}-|\alpha|} \|f\|_{C^{\min\{m+1,k\}}(\overline{\Omega})}.
\end{equation}
We finally need to extend this bound to the situation of $|\alpha|\le
\min\{r,m+1,k\}$. Hence, we only have to show something in addition if
$\min\{m+1,k\}\le r$. In this case, we have for the  Taylor polynomial
$p=T_{\min\{m,k-1\}}$
and $|\alpha|=\min\{m+1,k\}=\min\{m,k-1\}+1$ obviously $D^\alpha
p(\widehat{x})=0$. This then shows
\[
|D^\alpha f(\widehat{x})-D^\alpha Q_X(f)(\widehat{x})| \le
|D^\alpha f(\widehat{x})| \le \|f\|_{C^{\min\{m+1,k\}}(\overline{\Omega})},
\]
which is the desired extension of (\ref{est1}).
\end{proof}

Obviously, it makes sense to align the three parameters $m,k,r$ as
follows.
\begin{corollary}\label{cor: quasi interpolation error}
  If the smoothnesses are chosen as $k=r=m+1$  then,
  \begin{equation}\label{eq: quasi-interpolation key}
        \|f-Q_X(f)\|_{C^\ell(\overline{\Omega})} \le C_{k,\ell}
  h_{X,\Omega}^{k-\ell}\|f\|_{C^k(\overline{\Omega})},
  \end{equation}
for all $f\in C^k(\overline{\Omega})$ and all $0\le \ell\le k$.
\end{corollary}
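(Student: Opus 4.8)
The plan is to read the corollary off directly from the preceding theorem; no new estimates are needed, only a bookkeeping of the three minima appearing there. First I would set $k = r = m+1$ and substitute into the theorem's conclusion. With this alignment one has $\min\{m+1,k\} = \min\{m+1,m+1\} = m+1 = k$, so the exponent $h_{X,\Omega}^{\min\{m+1,k\}-\ell}$ collapses to $h_{X,\Omega}^{k-\ell}$ and the norm on the right-hand side, $\|f\|_{C^{\min\{m+1,k\}}(\overline{\Omega})}$, becomes $\|f\|_{C^{k}(\overline{\Omega})}$. This is exactly the right-hand side of \eqref{eq: quasi-interpolation key}.

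Second, I would verify that the range of differentiation orders $\ell$ for which the theorem applies becomes the full range $0 \le \ell \le k$ claimed in the corollary. The theorem permits $0 \le \ell \le \min\{r,m+1,k\}$; with $k = r = m+1$ this is $\min\{m+1,m+1,m+1\} = m+1 = k$, so every $\ell$ with $0 \le \ell \le k$ is admissible. Relabelling the constant $C$ of the theorem as $C_{k,\ell}$, to emphasize that $m$ and $r$ have been eliminated and only $k$ and $\ell$ remain free, then gives the statement as written.

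The only point deserving a remark is that the theorem is stated for target functions $f \in C^k(\R^d)$ while the corollary is phrased for $f \in C^k(\overline{\Omega})$; this is the closest thing to an obstacle, and it is not a genuine one. Since $\Omega$ is assumed convex, inspecting the proof of the theorem shows that the only use of the global domain is the Lagrange remainder formula applied along the segment joining two points of $\Omega$, and by convexity that segment stays in $\Omega$, so every quantity in the argument depends only on $f|_{\overline{\Omega}}$. Alternatively, one may invoke the Whitney extension mentioned in the Remark to extend $f$ from $\overline{\Omega}$ to all of $\R^d$ with control of the relevant $C^k$-norm up to a constant. Either way the passage is immediate, and I expect the whole proof to be a few lines of substitution.
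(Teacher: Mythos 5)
Your proposal is correct and matches the paper's intent exactly: the corollary is stated without proof as the immediate specialization $k=r=m+1$ of the preceding theorem, under which all three minima collapse to $k$ and the admissible range of $\ell$ becomes $0\le\ell\le k$. Your extra remark reconciling $f\in C^k(\R^d)$ in the theorem with $f\in C^k(\overline{\Omega})$ in the corollary via convexity or Whitney extension is a careful touch that the paper itself only gestures at in the preceding Remark.
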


\section{Multiscale based on quasi-interpolation} \label{sec: multiscale}

This section introduces our multiscale approach via our error
correction transform. We begin by presenting the setup. Then, we
provide the algorithm and a first illustrative example. Finally, we
use the above error estimates to derive a convergence result for our
multiscale scheme based on quasi-interpolation approximation. \hw{As a
  matter of fact, this result is rather a stability than a convergence
  result, as we can only show that convergence of the multilevel
  scheme is comparable to that of the single-level scheme. Hence, this
  result should only be seen as a starting point in answering the
  following two essential questions. Can a non-converging
  quasi-interpolation scheme be turned into a converging multilevel
  scheme and can the convergence of an already converging scheme be
  improved?}

A multiscale approximation is an iterative approach that combines trends from different scales of the approximated functions. We start from the approximation of low-resolution samples of the function. Then, each consecutive iteration approximates the error of the previous iteration with samples of higher resolution and uses the result to improve the previous approximant. 

\begin{figure}
    \centering
    	\begin{subfigure}[b]{.25\textwidth}
		\centering
		\includegraphics[width=\textwidth]{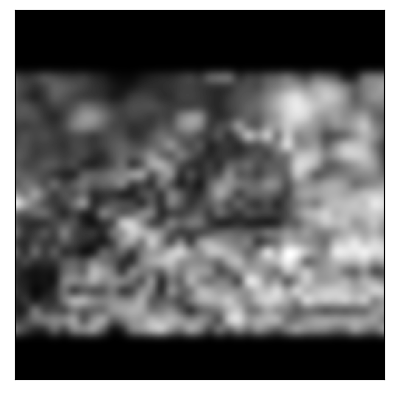}
	\end{subfigure} 
	    	\begin{subfigure}[b]{.25\textwidth}
		\centering
		\includegraphics[width=\textwidth]{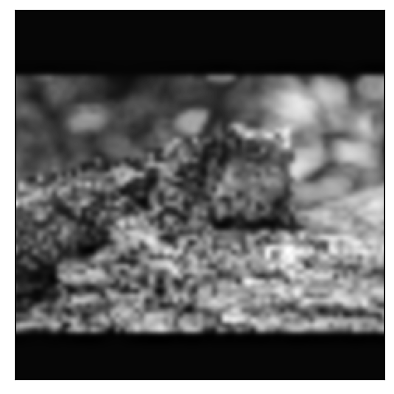}
	\end{subfigure} 
	    	\begin{subfigure}[b]{.25\textwidth}
		\centering
		\includegraphics[width=\textwidth]{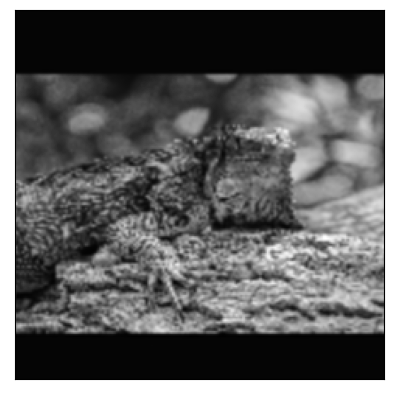}
	\end{subfigure}  \\
	    	\begin{subfigure}[b]{.25\textwidth}
		\centering
		\includegraphics[width=\textwidth]{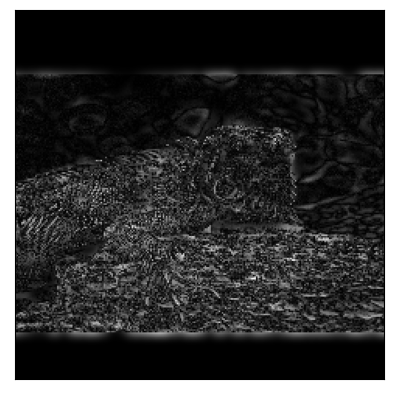}
	\end{subfigure} 
	    	\begin{subfigure}[b]{.25\textwidth}
		\centering
		\includegraphics[width=\textwidth]{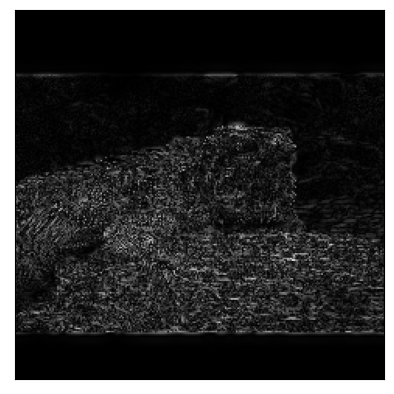}
	\end{subfigure} 
 \caption{An illustrative example of multiscale approximation of an
   iguana photo~\cite{photo}. The upper row is the
   approximation in different levels. The lower row is the error of
   the previous iteration. To calculate the next iteration, we approximate the error and add it to the previous approximation. We use a quasi-interpolation of the form~\eqref{eq: general_scheme} with $\sum_i a_i = 1$ (Shepard's method), scaling factor of $\mu=0.5$, and the sample fill distance is $h_X = 0.008$. For more details see Section~\ref{sec: numerical}. } 
    \label{fig: multiscale demo}
\end{figure}

Here, the general set-up is as follows. We have a sequence 
of data sets $X_1, X_2,\ldots\subseteq\Omega$ with mesh norms $h_j=h_{X_j,\Omega}$,
which are \hw{strictly} monotonically decreasing. To ensure a certain uniformity in
decrease, we will assume that $h_{j+1}= \mu h_j$ for some fixed \textit{scaling factor}
$\mu\in (0,1)$.  We will also require that the
sequence is {\em quasi-uniform}, which means that there is a constant
$c_q$ such that, with  $q_j=q_{X_j}$, 
\[
q_j \le h_j \le c_q q_j.
\]
Next, \hw{as the data sets are quasi-uniform, we will, for a fixed
  level $j$,  use the same support radius $\delta_j$ for all the weight
  functions $w_i^{(j)}$ of the moving-least squares operator on that
  level $j$. To be more precise, we pick}
 $\delta_j=\nu  h_j$ with a fixed $\nu>1$ and form moving
  least-squares approximation operators for level $j$ based on the
  points in  $X_j=\{x_1^{(j)},\ldots,x_{N_j}^{(j)}\}$ as 
\[
Q_jf:= \sum_{i=1}^{N_j} f(x_i^{(j)}) a_i^{(j)},
\]
where the $a_i^{(j)}$ are built using the weights
\[
w_i^{(j)}(x)= \Phi\left(\frac{x-x_i^{(j)}}{\delta_j}\right),
\]
with a compactly supported function $\Phi \colon \R^d\to\R$ as before.

The multiscale quasi-interpolation algorithm then works as a standard
residual correction algorithm, as detailed in Algorithm \ref{alg1}. We provide in Figure~\ref{fig: multiscale demo} an illustrative example of applying our residual correction algorithm for the approximation of an image. 

\begin{algorithm}
	\caption{Multiscale quasi-interpolation \label{alg1}}
	\hspace*{\algorithmicindent} \textbf{Input:} function to approximate $f$, levels number $n$, sequence of data sets $X_1,
	\ldots X_{n} \subseteq\Omega$ \\
	\hspace*{\algorithmicindent} \textbf{Output:} The approximation of $f$, that is $f_n$ 
\begin{algorithmic}
\STATE{$f_0=0$, $e_0=f$}
\FOR {$j=1,2,\ldots,n$}
\STATE{Compute the quasi-interpolation $s_j=Q_j e_{j-1}$ to $e_{j-1}$ based on data over $X_{j}$ }
\STATE{$f_j = f_{j-1}+s_j$}
\STATE{$e_j = e_{j-1}-s_j$}
\ENDFOR
\end{algorithmic}
\end{algorithm}

With this at hand, we have the following convergence result for
our multiscale quasi-interpolation approximation.
\begin{theorem} 
\label{thm:multiscale}
Let $\Omega\subseteq\R^d$ be a convex, bounded domain. Let $X_j\subseteq\Omega$ be a sequence of quasi-uniform data
  sets with fill-distances $h_{j+1}=\mu h_j$, $\mu \in (0,1)$. Let
  $\delta_j=\nu h_j$ with $\nu>1$. Let $\Phi\in
  C^k(\overline{\Omega})$, $k\in\N$,
  and let the moving quasi-interpolation approximation reproduce polynomials
  of degree $m\ge k-1$. \hw{Then, the approximation $f_n$ from Algorithm
  \ref{alg1} satisfies the bound}
\begin{equation}\label{eq:multiscale-step}
  \|f-f_n\|_{C(\overline{\Omega})} \le \hw{\frac{C_{0,k}h_0^k}{C_{k,k}}} (C_{k,k}\mu^k)^n
  \|f\|_{C^k(\overline{\Omega})},  
\end{equation}
  where the constants are from Corollary \ref{cor: quasi interpolation error}. Hence, the method
  converges if we choose $\mu$ such that $C_{k,k}\mu^k <1$.
  \end{theorem}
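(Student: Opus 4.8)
The plan is to unwind the residual-correction recursion of Algorithm~\ref{alg1} and then invoke the quasi-interpolation error estimate of Corollary~\ref{cor: quasi interpolation error} twice: once on every level to control the growth of a $C^k$-norm, and once on the last level to extract the decay. First I would record the bookkeeping identities produced by the algorithm. Since $s_j = Q_j e_{j-1}$ and $e_j = e_{j-1}-s_j$, one has $e_j=(I-Q_j)e_{j-1}$; and since $f_0=0$, $f_j=f_{j-1}+s_j$, one has $f_n=\sum_{j=1}^n s_j$, so that
\[
f-f_n = e_0-\sum_{j=1}^n s_j = e_n = (I-Q_n)(I-Q_{n-1})\cdots(I-Q_1)f .
\]
The point to keep in mind is that the error estimate requires its argument to lie in $C^k(\overline{\Omega})$; hence I would propagate a bound on $\|e_j\|_{C^k(\overline{\Omega})}$ through the levels and only at the final level trade it for the $C(\overline{\Omega})$-norm of the output.

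Next I would establish, by induction on $j$, that $e_j\in C^k(\overline{\Omega})$ with at most geometric growth of its $C^k$-norm. The base case is $e_0=f\in C^k(\overline{\Omega})$. For the inductive step, the hypotheses $\Phi\in C^k$ (so the smoothness parameter is $r=k$) and $m\ge k-1$ (so $\min\{m+1,k\}=k$) guarantee that $Q_j$ maps $C^k(\overline{\Omega})$ into itself — its basis functions $a_i^{(j)}$ belong to $C^r(\overline{\Omega})=C^k(\overline{\Omega})$ — hence $e_j=(I-Q_j)e_{j-1}\in C^k(\overline{\Omega})$, and Corollary~\ref{cor: quasi interpolation error} with $\ell=k$ yields
\[
\|e_j\|_{C^k(\overline{\Omega})} \le C_{k,k}\,h_j^{\,k-k}\,\|e_{j-1}\|_{C^k(\overline{\Omega})} = C_{k,k}\,\|e_{j-1}\|_{C^k(\overline{\Omega})},
\]
so $\|e_{n-1}\|_{C^k(\overline{\Omega})}\le C_{k,k}^{\,n-1}\|f\|_{C^k(\overline{\Omega})}$. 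Then I would apply Corollary~\ref{cor: quasi interpolation error} one last time, at level $n$ with $\ell=0$, and use $h_n=\mu^n h_0$ (from $h_{j+1}=\mu h_j$):
\[
\|f-f_n\|_{C(\overline{\Omega})} = \|(I-Q_n)e_{n-1}\|_{C(\overline{\Omega})} \le C_{0,k}\,h_n^{\,k}\,\|e_{n-1}\|_{C^k(\overline{\Omega})} \le C_{0,k}\,\mu^{nk}h_0^k\,C_{k,k}^{\,n-1}\,\|f\|_{C^k(\overline{\Omega})}.
\]
Collecting the powers of $C_{k,k}$ rewrites the right-hand side as $\tfrac{C_{0,k}h_0^k}{C_{k,k}}(C_{k,k}\mu^k)^n\|f\|_{C^k(\overline{\Omega})}$, which is exactly~\eqref{eq:multiscale-step}; since $C_{k,k}\mu^k<1$ the geometric factor, and with it the error, tends to $0$ as $n\to\infty$.

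The one genuinely delicate ingredient — and the reason this should be read as a stability rather than a true improvement statement — is the inductive step above: the quasi-interpolation bound spends $k$ derivatives of its argument, so to iterate it one must control $\|e_{j-1}\|_{C^k(\overline{\Omega})}$, and the only bound available for $\|(I-Q_j)g\|_{C^k(\overline{\Omega})}$ carries the $h_j$-power $h_j^{k-k}=1$, i.e.\ no decay whatsoever. This injects the undamped constant $C_{k,k}$ at every level, which is precisely what forces the restriction $C_{k,k}\mu^k<1$ rather than merely $\mu<1$. A minor prerequisite to check along the way is that each $Q_j$ is well defined at all — that the local sets $X_j(x)$ are $\pi_m(\R^d)$-unisolvent and the $a_i^{(j)}$ obey the stated derivative bounds — which holds because every $X_j$ is quasi-uniform with $\delta_j=\nu h_j$, $\nu>1$ taken large enough, by the lemma on moving least-squares bases in Section~\ref{sec:notation}. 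Beyond that, the argument is the arithmetic displayed above.
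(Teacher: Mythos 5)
Your proposal is correct and follows essentially the same route as the paper: unwind the recursion to get $f-f_n=e_n=(I-Q_n)e_{n-1}$, iterate Corollary~\ref{cor: quasi interpolation error} with $\ell=k$ to obtain $\|e_{n-1}\|_{C^k(\overline{\Omega})}\le C_{k,k}^{\,n-1}\|f\|_{C^k(\overline{\Omega})}$, and then apply it once with $\ell=0$ together with $h_n=\mu^n h_0$. Your added remarks on well-definedness of $Q_j$ and on why the undamped constant $C_{k,k}$ forces the condition $C_{k,k}\mu^k<1$ are consistent with the paper's own discussion following the theorem.
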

\begin{proof}
  By Corollary~\ref{cor: quasi interpolation error}, we have with $\ell=k$, 
  \[
  \|e_j\|_{C^k(\overline{\Omega})} =\|e_{j-1}-Q_j
  e_{j-1}\|_{C^k(\overline{\Omega})} \le C_{k,k}
  \|e_{j-1}\|_{C^k(\overline{\Omega})}, \qquad 1\le j\le n.
  \]
  Iterating this leads to the bound
  \[
  \|e_n\|_{C^k(\overline{\Omega})} \le C_{k,k}^n
  \|e_0\|_{C^k(\overline{\Omega})} = C_{k,k}^n
  \|f\|_{C^k(\overline{\Omega})}.
  \]
  Applying  Corollary \ref{cor: quasi interpolation error} with $\ell=0$ then yields
  \begin{eqnarray*}
\|e_n\|_{C(\overline{\Omega})} &=& \|Q_{n} (e_{n-1}) - e_{n-1}\|_{\hw{C(\overline{\Omega})}} \le C_{0,k} h_{n}^k
  \|e_{n-1}\|_{C^k(\overline{\Omega})} 
  \\
  &\le & C_{0,k} h_n^k C_{k,k}^{\hw{n-1}}
  \|f\|_{C^k(\overline{\Omega})} \le  \hw{\frac{C_{0,k}h_0^k}{C_{k,k}}}
  (\mu^kC_{k,k})^n \|f\|_{C^k(\overline{\Omega})}, 
  \end{eqnarray*}
  which is the stated result.
\end{proof}
Note that the convergence enforcing term $(C_{k,k}\mu^k)^n$ contains
in particular $\mu^{kn} = h_n^k$ which coincides with the convergence
enforcing term for the standard quasi-interpolation scheme. Here,
however, we also have the additional factor $C_{k,k}^n$. If
$C_{k,k}>1$ this would reduce the convergence significantly. If,
however, $C_{k,k}<1$, we would have improved
convergence. Unfortunately, the constant $C_{k,k}$ is not really
accessible and also depends on the factor $\nu>1$ from $\delta_j=\nu
h_j$. Though, it might be possible to enfore a small $C_{k,k}$ by
choosing $\nu$ wisely, a simple calculation in the case $k=1$ shows
that the above mathematical techniques always lead to a constant
$C_{1,1}>1$ which can become arbitrarily close to $1$ by increasing
$\nu$. Nonetheless, the numerical examples below show that these
mathematical techniques do not truly reflect the error behaviour of
the multilevel quasi-interpolation scheme. Obviously, more research in this
direction is necessary.

\section{Transition to the setting of manifold-valued data } \label{sec: transition_to_manifold}

In this section we construct the theory for quasi-interpolation and
multiscale approximation for manifold-valued functions. We begin with changing
notations for manifolds. Then, we show how the moving least squares
method is translated to manifold-valued data via the Karcher
mean. After that, we define the smoothness descriptor which is the
manifold version of $C^k$. We end with the quasi-interpolation and
multiscale versions and results that we do not prove. 

\subsection{Updating our notation}

Let $\cM$ be a Riemannian manifold equipped with its Riemannian metric tensor $g$ which induces an intrinsic norm $\abs{\cdot}_{g(p)}$ on the tangent space $T_p\cM$ at $p\in\cM$. Then, the Riemannian geodesic distance $\rho(\cdot,\cdot)\colon\cM^2\to\mathbb{R}_+$ is
\begin{align}~\label{RiemannianDistance}
	\rho(b,q)=\inf_{\Gamma}\int_{0}^{1}\abs{\dot{\Gamma}(t)}_{g(\Gamma(t))}dt, 
\end{align}
where $\Gamma\colon[0,1]\to\cM$ is a curve connecting points $\Gamma(0)=b$ and $\Gamma(1)=q$.

Manifold-valued functions map a Euclidean domain $\Omega$ to a fixed, known manifold $\cM$. We denote manifold-valued functions with uppercase letters and functions with vector values, such as values from a tangent space of $\cM$, in lowercase letters. Also, denote the exponential map of a vector $v$ in the tangent space $T_{ \cM }(b)$ around a base point $b$, and its inverse logarithm map by 
\[
b \oplus v  = \Exp{b}{v}, \quad \text{ and } \quad q \ominus b  = \Log{b}{q} , \quad b,q \in \cM .
\]
Namely, $b \oplus v$ maps a vector $v$ at $T_{ \cM }(b)$ to the point on the manifold which lies at a distance $\norm{v}$ along the geodesic line from $b$ parallel to $v$. If $q$ is inside the injectivity radius, then $b \oplus (q \ominus b) = q$ and the geodesic distance $\rho(b,q)$ is merely $\norm{q \ominus b}$, with the Euclidean norm at $T_{ \cM }(b)$. When $G$ and $H$ are $\cM$-valued functions (or sequences),  we assume a common representation (or indexing), for example, $G \ominus H = G(x) \ominus H(x)$ for all $x$ in the common domain. 

\subsection{Applying Karcher means}  

We generalize the quasi-interpolation scheme~\eqref{eq: general_scheme} to manifold-valued scattered data, based on the given Riemannian structure, via the notion of Karcher mean. This intrinsic averaging method adapts scalar operators to manifold values and naturally agrees with the quasi-interpolation MLS formulation~\eqref{eq: moving least squares} as we will see next.

We refer to the linear combination at~\eqref{eq: general_scheme} and
assume that the coefficients form a partition of unity, that is they satisfy
\[
\sum_{i=1}^N a_i(x) = 1, \quad x \in \Omega .
\]
This condition implies the exact reproduction of constants, see
Definition~\ref{def:poly_rep}. \rev{Note that we do not assume the positivity of the coefficients, which is usually not guaranteed}. More details on such operators are given on Section~\ref{sec: numerical}. Next, for any $x$, the value
in~\eqref{eq: general_scheme} can be characterized as the unique
solution of the optimization problem 
\begin{align}~\label{LinearFrechetMean}
 \arg\min_{y\in\mathbb{R}} \sum_{i=1}^N a_i \|y-f(x_{i})\|^2.
\end{align}
Namely, this form identifies the linear combination~\eqref{eq:
  general_scheme} as the Euclidean center of mass. In particular, the
result of~\eqref{LinearFrechetMean} is the center of mass of $\{
f(x_{i})\}_{i=1}^N$ with respect to the weights $\{ a_{i}\}_{i=1}^N$. 

For $\cM$-valued data, we rewrite the optimization
problem~\eqref{LinearFrechetMean} by replacing the Euclidean distance
with the Riemannian geodesic distance~\eqref{RiemannianDistance} and
restric the search space to our manifold. Namely, for the samples of a
manifold-valued function $F$ we define, 
\begin{align}~\label{eq: quasi-interpolation manifold}
 Q^\mathcal{M}_X(F)(x)=\text{arg}\min_{q \in \cM} \sum_{i=1}^N a_i \, \rho(q,F(x_i))^2 , \qquad x \in \Omega.
\end{align}
The above is the manifold-valued counterpart of the approximation
operator $Q$ from \eqref{eq: general_scheme} for constant reproduction. We focus on cases when the solution of~\eqref{eq: quasi-interpolation manifold} exists uniquely, and then it is termed the \emph{Riemannian center of mass}~\cite{grove1973conjugatec}. \rev{This solution is also known as the  Karcher mean, especially concerning matrix spaces. In general metric spaces, however, the same solution is called the Fr\'{e}chet mean}, see~\cite{Karcher2014riemannian}. 

\rev{The question of the existence and uniqueness of the Riemannian center of mass was the subject of many studies. We briefly survey it while highlighting some of them. The global well-definedness of~\eqref{eq: quasi-interpolation manifold} with nonnegative coefficients $a_i\geq 0$ is studied in~\cite{kobayashi1963foundations}. More on a globally unique solution is derived when the manifold $\cM$ has a nonpositive sectional curvature, e.g.,~\cite{hardering2015intrinsic, karcher1977riemannian, sander2016geodesic}. On the other hand, recent studies regarding manifolds with positive sectional curvature show the necessary conditions for uniqueness on the spread of points with respect to the injectivity radius of $\cM$~\cite{dyer2016barycentric, SvenjaWallnerConvergenceSphere}.}

\rev{Even with a guarantee regarding the existence and uniqueness of the Riemmanian center of mass, typically, there is no close form of it, particularly when there are more than two points involved (two points average is equivalent to the center point of their connecting geodesic). Therefore, calculating the Riemannian center of mass is done numerically with iterative processes, see, e.g.,~\cite{iannazzo2019derivative}. Thus, one interpretation of many alternative methods for adapting approximation operators to manifold values is seeing them as finite approximations for~\eqref{eq: quasi-interpolation manifold}.} Finite procedures include, for example, exp-log methods~\cite{grohs2012definability, rahman2005multiscale}, repeated binary averaging~\cite{dyn2017global, wallner2005convergence}, inductive means~\cite{dyn2017manifold}, and more. Next, in the numerical section, we show examples of algorithms for calculating the Karcher mean.

\subsection{Smoothness descriptor}

For obtaining error bounds, we must first show how we measure the
smoothness of a manifold-valued function. In other words, we seek
manifold-valued settings analogous to the spaces of $k$-times
continuously differentiable functions and their norms. One natural
generalization uses smoothness descriptors~\cite{grohs2013quasi,
  grohs2017scattered}. Specifically, we are inspired
by~\cite{grohs2017scattered} where the smoothness descriptor uses the
covariant derivatives of a manifold-valued function $F:\Omega\to
\cM$. In our analysis, we examine the deviation between $F$ and its
approximation $ Q^\mathcal{M}_X(F)$, that is $F\ominus
Q^\mathcal{M}_X(F)$. This quantity is a vector field attached to
$Q^\mathcal{M}_X(F)$ and therefore, the smoothness descriptor we
present is given as the $C^k$ norm of a vector field. 

Let $G:\Omega\to \cM$ be a function with a vector field $W$ attached
to it. That is, for all $x\in\Omega$, $W(x)\in T_{G(x)}\cM$. Then, the
$r$-th index of the covariant derivative of $W$ in the direction $x^l$ \rev{(the $l$th component of $x$)} is given by
\begin{equation}\label{eq: covariant derivative}
    \frac{D}{dx^l}W^r(x) := \frac{dW^r}{dx^l}(x) + \Gamma^r_{ij}(G(x)) \frac{d G^i}{d x^l}W^j(x),
\end{equation}
where we sum over repeated indices and denote with $\Gamma^r_{ij}$ the
Christoffel symbols associated to the metric of $\cM$ as in
\cite{do1992riemannian}. In addition, $W^i$ and $G^i$ mean the $i$-th component
 of $W$ and $G$, respectively. 

For iterated covariant derivatives we introduce the
symbol $\mathcal{D}^\mathbf{l}(F\ominus G)$ which means covariant partial differentiation along $F\ominus G$ with respect to the
multi-index $\mathbf{l}$ in the sense that
\[
\mathcal{D}^\mathbf{l} (F\ominus G) := \frac{D}{dx^{l_k}}\cdots\frac{D}{dx^{l_1}}(F\ominus G). 
\]
All derivatives are vector fields attached to $G$ \rev{and the multi-index here must respect order as the covariant partial derivatives do not commute.} Finally, we define the vector field smoothness descriptor of the error $F\ominus G$, which is the manifold analogue of~\eqref{eqn:smoothness_desc_real_values}.
\begin{definition}
For functions $F, G \colon  \Omega\to\cM$, and $U\subseteq\Omega$, we define \rev{their mutual} $k$-th order smoothness descriptor:
  \begin{equation*}
      \norm{F\ominus G}_{C^k(\overline{U})}:= 
      \max_{\abs{\mathbf{l}} \le k}  \,
      \sup_{x\in U}{\abs{\mathcal{D}^{\mathbf{l}}(F\ominus G)}_{g(G(x))}},
  \end{equation*}
\rev{where $g$ is the metric tensor of $\cM$.}
\end{definition}

\rev{For the following discussion, we denote by $B \colon \Omega\to\cM$ a reference function such that $B(x)$ is within the injectivity radius of $F(x)$ for every $x\in\Omega$. We later consider some practical aspects of constructing such a reference. Next,} we further define the induced smoothness class:
\begin{definition}
We say that a function $F\colon \Omega\to\cM$ is locally in \rev{$C^k(p,B,\overline{\Omega})$}, with respect to a fixed point $p\in \cM$ \rev{and a reference function $B \colon \Omega\to\cM$,} if $\norm{F\ominus B}_{C^k(\overline{U})}$ is finite inside the ball around $p$ within the injectivity radius. If, in addition, $F \in C^k(p,\overline{\Omega})$ for any $p \in \cM$, we say that $F \in C^k(\overline{\Omega})$ \rev{with respect to $B$}.
\end{definition}

Given a function $F\in C^k(\overline{\Omega})$, we wish for a manifold analogue of~\eqref{eq: quasi-interpolation key}, that is, a bound of the form
\begin{equation}\label{eq: desired quasi-interpolation}
    \norm{F \ominus Q^\cM(F)}_{C^\ell(\overline{\Omega})} \leq C_{k,\ell}  h_{X,\Omega}^{k-\ell} \norm{F\ominus B}_{C^k(\overline{\Omega})},
\end{equation}
where $h_{X,\Omega}$ is the fill distance,
see~\eqref{eqn:fill_dist_separation_rad}. In particular, we should give a special care for the case of $\ell=0$, that is $
\norm{F \ominus Q^\cM(F)}_\infty := \norm{F \ominus
  Q^\cM(F)}_{C^0(\overline{\Omega})}$.

In the following remark, we survey two related bounds.
\begin{remark}
Two known results in the spirit of~\eqref{eq: desired quasi-interpolation} are worth mentioning.
The first, from~\cite{grohs2013quasi}, uses embedding of $F$ in a higher dimensional Euclidean space. It is shown that the quasi-interpolation in the manifold is converging to the linear quasi-interpolation of the Euclidean space. Then, for $F\in C^\alpha$, $l < \alpha$, and for any chart $\gamma$, it was shown that
\[
\left(\frac{d}{dx}\right)^l\left(\gamma\circ F - \gamma \circ Q^\cM(F)\right) = \mathcal{O}(h^{\alpha-l}).
\]
The second result is from \cite{grohs2017scattered} and develops the Taylor expansion of $G(x, y)=F(y)\ominus Q^\cM(F)(x)$, which is in the tangent space of $Q^\cM(F)(x)$. It then leads to a similar result with
\[
\rho(F, Q^\cM(F)) \leq C_Q \Theta_{k}(F) \sup_{1\leq r\leq k}\ \sup_{x,y\in\Omega}{\nabla^r_2{F(y)\ominus Q^\cM(F)(x)}} h^k , 
\]
where $\nabla^r$ is the covariant derivative of a bivariate function \rev{and $ \Theta_{k}(F)$ is a smoothness descriptor similar to $\norm{F}_{C^k}$ but measured on a vector fields with tangent vectors from the values of $F$}. Nevertheless, the multiscale proof assumes a stronger bound of the form~\eqref{eq: desired quasi-interpolation}.
\end{remark}

\subsection{Multiscaling of manifold-valued data} \label{sec:multiscal_manifolds}

As in the Euclidean case, we will use the quasi-interpolation operators in a multiscale fashion to derive higher approximation capabilities and a dynamic framework for analyzing and processing data. We will now describe the scheme in more detail. First, we define the manifold scattered data version of quasi-interpolation and multiscale approximations. Then, we describe the assumptions needed and the resulted multiscale error bound. 

Assume $X_1, X_2, \ldots, X_n \subseteq \Omega$  is a sequence of discrete sets with more and more data sites. We choose support radii $\delta_1>\delta_2>\cdots>\delta_n$, which should be proportional to the associated  fill distances $h_1>h_2>\cdots>h_n$ of the sets $X_j$. Then, for $j=1,2,\ldots,n$ we compute local quasi-interpolants according to~\eqref{eq: quasi-interpolation manifold}, that is
\begin{equation} \label{eqn:localManifoldQuasi}
Q^{\cM}\left(X_j, F \right)(x):= \argmin_{q \in
  \cM} \sum_{\{i \mid x_i \in X_j \}} \rev{a_i \ \rho}(q,F(x_{i}))^2 ,
\end{equation}
However, as explained above, we will not apply these operators to the function itself but rather to the error function of the previous iteration. In the manifold setting, this requires some more preparation. 

Here, the multiscale method starts by setting a base manifold-valued function $B$. In algebraic settings, where $\cM$ is a Lie group, we can fix $B$ as the constant function with the identity element. Otherwise, we wish this function to be in the vicinity of the data. \rev{The reason is that we wish to be able to parallel transport the error vectors to the tangent space of $B$. Therefore, by $ B \oplus e_{j-1}$, we mean that each vector $ e_{j-1}(x)$ is parallel transported to the tangent of $B(x)$ and similar each vector in $S_j \ominus B$ is parallel transported to the tangent of $F_{j-1}$. Note that since Lie groups are parallelizable, this action is always possible. As a construction for $B$,} we can use, for example, a Voronoi tessellation with the samples $\rest{F}{X_{1}}$. We set at the begining $F_0 = B$ and $e_0 = F \ominus B$. \rev{The whole algorithm is described in detail in the following Algorithm~\ref{alg2}.}

\begin{algorithm}
	\caption{\rev{Manifold-valued multiscale quasi-interpolation} \label{alg2}}
	\hspace*{\algorithmicindent} \textbf{Input:} function to approximate $F$, levels number $n$, sequence of data sets $X_1\ldots X_{n} \subseteq\Omega$, a base manifold function $B$ \\
	\hspace*{\algorithmicindent} \textbf{Output:} The approximation $F_n$ 
	\begin{algorithmic}
		\STATE{$F_0=B$}
		\STATE{$e_0 = F \ominus B$}
		\FOR {$j=1,2,\ldots,n$}
		\STATE{$S_j = Q^{\cM}\left( X_{j},  B \oplus e_{j-1}  \right)$}
		\STATE{$F_j =  F_{j-1} \oplus (S_j \ominus B )$}
		\STATE{$e_j = F \ominus F_j$}
		\ENDFOR
	\end{algorithmic}
\end{algorithm}

\rev{Note that in the Euclidean case, Algorithm~\ref{alg2} is reduced to the previously scalar-values multiscale residual correction in Algorithm~\ref{alg1},} as $+$, $-$ naturally replace $\oplus$ and $\ominus$, respectively, and $B \equiv 0$ (the identity there). 

Our next result implies that with a sufficiently accurate quasi-interpolation operator, the multiscale approximation error decays as we observed in the scalar case. 
\begin{corollary} \label{cor:manifold_quasi_err}
  Let $\cM$ be a complete, open manifold. Then, under the condition of Theorem~\ref{thm:multiscale}, its notation, and assumption~\eqref{eq: desired quasi-interpolation}, the following holds for all $F \in C^k(\overline{\Omega})$:
    \begin{equation}\label{eq:multiscale-step-manifolds}
  \norm{F\ominus F_n}_\infty \le C_{0,k} (C_{k,k}\mu^k)^n
  \norm{F\ominus B}_{C^k}.
\end{equation}
\end{corollary}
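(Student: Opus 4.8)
The plan is to imitate the proof of Theorem~\ref{thm:multiscale}, replacing the Euclidean vector operations $\pm$ by $\oplus,\ominus$ and letting the hypothesis~\eqref{eq: desired quasi-interpolation} play the role that Corollary~\ref{cor: quasi interpolation error} plays in the scalar case. The engine of the argument is the error-correction identity
\[
e_j \;=\; F\ominus F_j \;=\; (B\oplus e_{j-1})\ominus S_j ,
\]
i.e. the residual after step $j$ of Algorithm~\ref{alg2} equals the quasi-interpolation error committed when $S_j=Q^\cM(X_j,\,B\oplus e_{j-1})$ approximates the manifold-valued function $B\oplus e_{j-1}$. In the Euclidean situation this is nothing but $e_{j-1}-(S_j-B)=(B+e_{j-1})-S_j$; in the manifold setting one would derive it by unwinding the updates $F_j=F_{j-1}\oplus(S_j\ominus B)$ and $F=F_{j-1}\oplus e_{j-1}$ through the parallel-transport conventions fixed in Section~\ref{sec: transition_to_manifold}, using completeness of $\cM$ together with the standing assumption that at every stage the points $F_{j-1}(x)$, $S_j(x)$, $B(x)$ and $F(x)$ lie within one another's injectivity radii, so that $\oplus$ and $\ominus$ are genuine inverses and the transported vectors can be compared in a common tangent space.

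Granting that identity, the recursion is immediate. Since $B(x)$ is inside the injectivity radius of $F(x)$ and, by the proximity of $B$ to the data, also of $(B\oplus e_{j-1})(x)$, we have $(B\oplus e_{j-1})\ominus B = e_{j-1}$ and hence $\norm{(B\oplus e_{j-1})\ominus B}_{C^k(\overline{\Omega})}=\norm{e_{j-1}}_{C^k(\overline{\Omega})}$. Applying~\eqref{eq: desired quasi-interpolation} to $B\oplus e_{j-1}$ with reference $B$, taking $\ell=k$ and $h_{X_j,\Omega}=h_j$, gives
\[
\norm{e_j}_{C^k(\overline{\Omega})} = \norm{(B\oplus e_{j-1})\ominus S_j}_{C^k(\overline{\Omega})} \le C_{k,k}\,\norm{e_{j-1}}_{C^k(\overline{\Omega})}, \qquad 1\le j\le n-1 ,
\]
so iterating from $e_0=F\ominus B$ yields $\norm{e_{n-1}}_{C^k(\overline{\Omega})}\le C_{k,k}^{\,n-1}\norm{F\ominus B}_{C^k(\overline{\Omega})}$. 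For the last level one instead uses~\eqref{eq: desired quasi-interpolation} with $\ell=0$,
\[
\norm{F\ominus F_n}_\infty = \norm{(B\oplus e_{n-1})\ominus S_n}_{C^0(\overline{\Omega})} \le C_{0,k}\,h_n^{\,k}\,\norm{e_{n-1}}_{C^k(\overline{\Omega})} \le C_{0,k}\,h_n^{\,k}\,C_{k,k}^{\,n-1}\,\norm{F\ominus B}_{C^k(\overline{\Omega})} ,
\]
and substituting $h_n=\mu^n h_0$ and absorbing the fixed factor $h_0^{\,k}/C_{k,k}$ into the constant produces the claimed bound $C_{0,k}(C_{k,k}\mu^k)^n\norm{F\ominus B}_{C^k}$, which converges precisely when $C_{k,k}\mu^k<1$.

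The main obstacle, and presumably the reason the statement is only asserted here and not proved, is the first step: making the error-correction identity rigorous in the curved setting. Unlike in $\R^d$, the maps $\oplus$ and $\ominus$ are not linear, do not commute with parallel transport, and compose cleanly only within injectivity radii, so a fully rigorous treatment must either (i) carry along explicit estimates for the curvature-induced defect between $(B\oplus e_{j-1})\ominus S_j$ and $F\ominus F_j$, showing it is of strictly higher order in $h_j$ than the leading term and can therefore be absorbed into the constants, or (ii) impose quantitative smallness assumptions on $\norm{F\ominus B}_{C^k}$ and on the fill distances $h_j$ ensuring that all iterates $F_j$, the intermediate functions $B\oplus e_{j-1}$ and the quasi-interpolants $S_j$ stay in a single geodesically convex neighbourhood on which these operations behave as in the Euclidean case. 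A secondary technical point is to check that the smoothness descriptor of $B\oplus e_{j-1}$ relative to $B$ really equals $\norm{e_{j-1}}_{C^k}$ once the Christoffel terms of~\eqref{eq: covariant derivative} are taken into account, rather than merely being comparable; if only comparability is available, an extra fixed factor enters $C_{k,k}$, but the convergence conclusion is unaffected.
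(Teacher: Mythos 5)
Your argument is essentially the one the paper intends: Corollary~\ref{cor:manifold_quasi_err} is stated without proof precisely because, once the single-level bound~\eqref{eq: desired quasi-interpolation} is \emph{assumed}, the iteration from the proof of Theorem~\ref{thm:multiscale} (apply the bound with $\ell=k$ at levels $1,\dots,n-1$, with $\ell=0$ at level $n$, then substitute $h_n=\mu^n h_0$ and absorb the fixed factor into the constant) carries over verbatim, and that is exactly what you do. The only substantive point of comparison is the step you correctly single out as the obstacle: identifying the residual $F\ominus F_j$ with the quasi-interpolation error of $B\oplus e_{j-1}$. You posit this as an identity and then list the curvature caveats needed to justify it; the authors' own (withheld) treatment, visible in the source as a commented-out theorem, instead replaces the identity by the one-sided estimate $\norm{F(x)\ominus F_j(x)}\le \norm{e_{j-1}(x) - \left(S_j(x)\ominus B(x)\right)}$ obtained from the Rauch comparison theorem under an additional hypothesis of non-negative sectional curvature --- which is precisely the kind of quantitative control you describe under your options (i) and (ii), and which buys an inequality that suffices for the recursion without ever needing the exact error-correction identity. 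Since the corollary as printed simply assumes~\eqref{eq: desired quasi-interpolation} and asserts the conclusion, your derivation is as complete as anything the paper offers, and your diagnosis of where the real mathematical work lies is accurate.
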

\rev{Note that the error bound in Corollary~\ref{cor:manifold_quasi_err} is theoretical and does not reflect any numerical errors that may arise, for example, from calculating~\eqref{eqn:localManifoldQuasi}. Next, we address different aspects of the numerical evaluation of our method.}  
\section{Numerical examples} \label{sec: numerical}

This section provides a numerical perspective to our multiscale
study. First, we present some demonstrations of the multiscale scheme
and then illustrate the numerical aspects of some of the theoretical
findings we obtained. Finally we suggest an application to the scheme
in anomaly detection. The entire source code is available as python
implementation for reproducibility at the repository \cite{rep}. 

With $\phi_{d,k}$ we denote the compactly supported radial basis
functions of minimal degree from \cite{wendland_2004} with indices $d,
k$ meaning that the RBF $\Phi=\phi_{d,k}(\|\cdot\|_2)$ is in $C^{2k}(\R^d)$ and is
\textit{positive definite} on $\R^{\ell}$ for $\ell\leq d$.

In what follows, we use the Wendland function $\phi_{3,1}$, see \cite{wendland1995piecewise}, to construct the quasi-interpolation kernel,
\begin{equation}
\label{eq: wendland function}
   \phi(r):= \phi_{3,1}(r) = (1-r)^4_+(4r+1).
\end{equation}
To construct $Q_X$, we use $\phi$ as in Shepard's method, that is
\begin{equation}
    \label{eq: shepard}
    Q_X(f)(x) = \sum_{x_i \in
      X}{\frac{\phi(\|x-x_i\|_2)f(x_i)}{\sum_{x_j\in
          X}{\phi(\|x-x_j\|_2)}}}, \qquad x \in \Omega. 
\end{equation}
This method, by definition, reproduces constants, that is $Q_X(c)(x)=c$. 
We use $Q_X$ for our multiscale approximation, but also as a reference
approximation method. In the examples below, we refer to the
quasi-interpolation approximation as ``single scale''. 

To evaluate error between a function $f$ and its approximation at the
$j$-th scale $f_j$, we use a discrete $L_\infty$ norm of the form: 
\begin{equation} \label{eqn:error}
    \max_{x_i \in G(R, h)}{\left|f(x_i) - f_j(x_i)\right|},
\end{equation}
where $G(R, h)$ is a grid of points in the subdomain $R \subset \mathbb{R}^d$ with a fill distance $h$. Here, we use a rectangular $R\subset \mathbb{R}^2$ strictly inside $\Omega$ \rev{where the approximation support is still inside the domain. In other words, we do not consider special boundary treatments.} Also, the resolution of the error evaluation is with $h = 0.02$, which is much finer than the sampling fill distance $h_X$. 

\rev{In the following examples, when we compare the multiscale with the single scale method, we use the same data samples having the same fill distance.} All calculations are done in \rev{P}ython using packages from~\cite{2020NumPy-Array, 2020SciPy-NMeth}, and for plots, we use~\cite{hunter2007matplotlib}.

\subsection{A comparison between multiscale and single scale: the real-valued case} \label{sec:first_comparison}

\rev{As our first test case, we demonstrate the multiscale approximation, specifically its error rates, compare with the quasi-interpolation.} We approximate a real function over $\R^2$, and use samples over the domain $\Omega := [-0.95, 0.95]^2$ with a fill distance $h_X = 0.375$ and a scaling factor $\mu = 0.8$. The sample sites, for each level, are generated from a Halton sequence~\cite{halton1960efficiency}. For more details, see Appendix~\ref{app: scattered data}.

We have approximated the following test function:
\begin{equation}\label{eq: approximated functions_h}
    h(x, y) = 5e^{-x^2-y^2}.
\end{equation}
\rev{We compare the multiscale method and the single scale method over the test function and depict the results in Figure~\ref{fig:numbers comparison}. The figure presents the error decay, measured according to~\eqref{eqn:error}. This first example} shows that the error of the multiscale approximation decays faster than the error of the quasi-interpolation with the same data. \rev{Note that this figure and some of the next error figures are given in a log-log scale. Namely, we apply the logarithm both over the fill distance and the observed error. This way, one can identify the slope of the error rate with the approximation order, that is, the power $k$ in~\eqref{eq:multiscale-step}. More on the usage of the log-log scale appears in the next section.}


\begin{figure}[!htb]
	\centering
	\includegraphics[width=.4\textwidth]{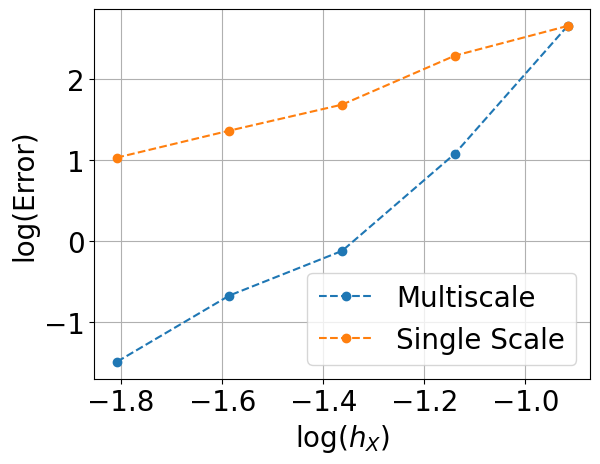}
	\caption{Error rates for~$h$ of~\eqref{eq: approximated functions_h}: the multiscale approximation versus
          the single scale approximation. The error is measured by
          maximum deviation error, see~\eqref{eqn:error}, and given in a
          log-log plot as a function of the fill distance,
          see~\eqref{eqn:fill_dist_separation_rad}.} 
	\label{fig:numbers comparison}
\end{figure}

\subsection{Validating our analysis}


In this part, we numerically validate several of our theoretical findings. The function we approximate is: 
\begin{equation}
\label{eq: approximation func}
    f(x, y) = \sin(2x + 1) \cos(3y + 1.5).
\end{equation}
Here, we continue to use $\phi_{3,1}$ as above and the data sites $X$
are grids with the same fill distance and size as the sampling grids before.
 
We investigate two of the main parameters of the multiscale method: the number of iterations, which we denote by $j$, and the scaling factor $\mu$. In particular, we test numerically Theorem~\ref{thm:multiscale}, by observing the error decay as a function of the iterations $j$ via a log simplification of~\eqref{eq:multiscale-step}:
\begin{equation}
    \label{eq: multiscale single mu fit}
    \log\norm{f_j -f}_{\infty}\leq \log{C_{0, k}\norm{f}_{C^k\left(\overline{\Omega}\right)}}+j\log{\left(C_{k,k}\mu^k\right)}.
\end{equation}
Here, since we use constant reproduction quasi-interpolation, we have $k=1$.

First, we set three fixed values of $\mu$ and apply our multiscale approximation for each one. Then, we calculate the slopes of each experiment when fitting a linear connection between the log of the approximation error and $j$, as in~\eqref{eq: multiscale single mu fit}. The results are depicted in the upper row of Figure~\ref{fig: multi mu fig} and indicate that the relation is indeed close to be linear as in the bound we obtained.

\begin{figure}[!ht]
    \centering
    \begin{subfigure}{0.3\textwidth}
        \includegraphics[width=\linewidth]{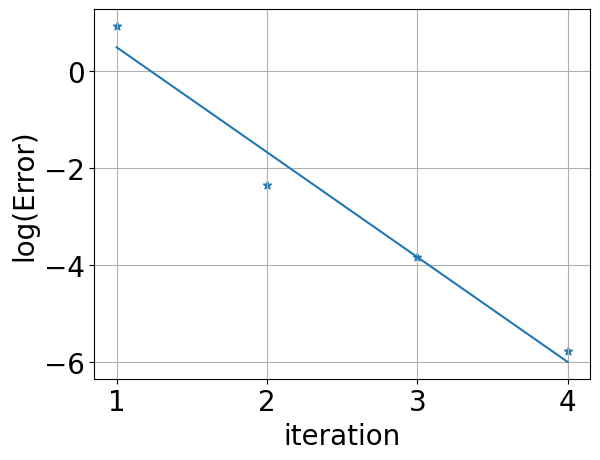}
        \caption{$\mu = 0.5$}
    \end{subfigure}
    \begin{subfigure}{0.3\textwidth}
        \includegraphics[width=\linewidth]{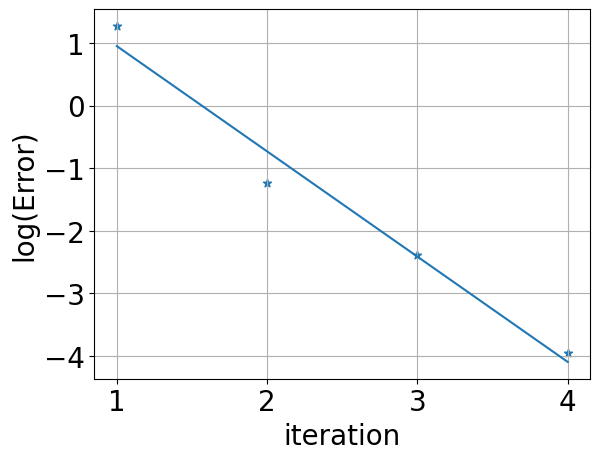}
        \caption{$\mu = 0.6$}
    \end{subfigure}
    \begin{subfigure}{0.3\textwidth}
        \includegraphics[width=\linewidth]{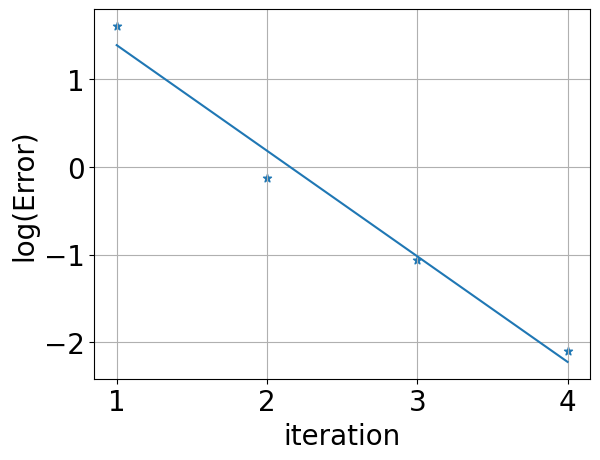}
        \caption{$\mu = 0.7$}
    \end{subfigure} \\
    \includegraphics[width=.4\linewidth]{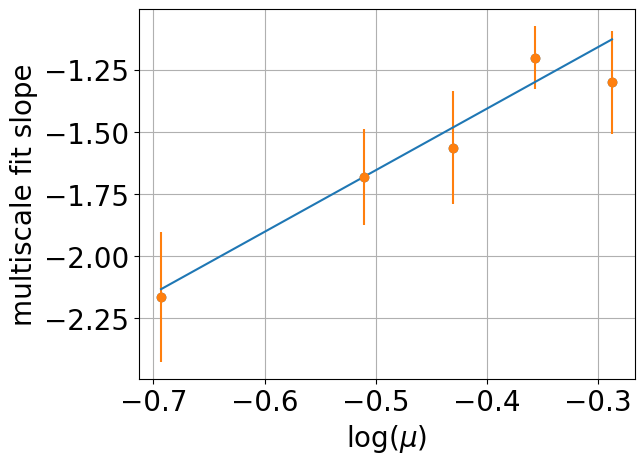}
    \caption{Exploring Theorem~\ref{thm:multiscale} via~\eqref{eq: multiscale single mu fit} as reflects by numerical tests: In the upper row, we show the approximation error as a function of the iterations and assign a slope value according to a linear fit. The bottom figure presents the connection between the slopes of each iteration and $\log(\mu)$ as in~\eqref{eq: multiscale milti mu fit}. The error bars are calculated from the different $\mu$-s fits. The error of each slope is the standard deviation derived from the fitted parameters' co-variance matrix. The result reveals the values $\log {C_{1,1}} = -0.42$ and $k = 2.47$. The latter is a larger value that the minimal guaranteed value of $1$.}
    \label{fig: multi mu fig}
\end{figure}

In~\eqref{eq: multiscale single mu fit}, the slope of the above linear connection is
\begin{equation} \label{eq: multiscale milti mu fit}
	\log{C_{k,k}} + k\log{\mu}.
\end{equation}
Recall that in order to guarantee convergence we must have $C_{k,k}\mu^k<1$. Therefore, to estimate this crucial quantity, we reuse the above results and for each $j$ we draw the average slope as a function of $\log(\mu)$. Finally, we use a linear regression to extract from the slopes and~\eqref{eq: multiscale milti mu fit} the parameters $\log {C_{1,1}}$ and $k$. The bottom plot of Figure~\ref{fig: multi mu fig} shows this linear fit, where we get $\log {C_{1,1}} = -0.42$ and thus $C_{1,1} = 0.66$. This value promises convergence for the $\mu$ values we used for the test. In addition, we have $k=2.47$, which is much better than theoretical bound of $1$.

\subsection{Obtaining quadratic rates with constant reproducing via multiscale}
\label{sec: complexity comparison}

The advantage of Shepard's method with constant reproduction~\eqref{eq: shepard} lies in its simplicity, which results in an attractive complexity and preferred execution time compared to higher degree reproduction schemes, see Remark~\ref{rem:QIcomplexity} below. In this section, we show that our multiscale approach, when using Shepard's operators, can provide a comparable approximation result to a quasi-interpolation with a higher polynomial reproduction degree. Namely, one can achieve much-improved error decay by using multiscale while benefiting from the low computation load of Shepard's method.

For this example, we approximate the function
	\begin{equation}\label{eq: approximated functions}
		g(x, y) = \sin(4x)\cos(5y),
	\end{equation}
 and use the same approximation settings as in Section~\ref{sec:first_comparison}. To obtain higher order polynomial reproduction, in this case, quadratic, we use the moving least-squares method from~\cite{wendland2001local}. 

In Figure~\ref{fig: const_multi_vs_quadratic_single}, we compare the multiscale method based on Shepard's method, a quasi-interpolation with constant reproduction, and two other quasi-interpolation methods, applied in a single scale fashion. One operator has a constant reproduction, and the other with quadratic reproduction. We present both the error decay rates and the timing comparison, measured in seconds on a standard laptop device. 

The error rates figure shows that the multiscale method, based on reproducing constants, has at least a similar decay rate as the quasi-interpolation, based on reproducing quadratic. These error rates are superior to those directly applying Shepard's method, the single scale with constant reproduction. The timing figure shows the calculating time, in seconds, needed for each method for evaluating the approximations on a fixed fine grid $G(R,h)$. The first phenomenon observed in the figure is the distinct difference between the two quasi-interpolations; see also Remark~\ref{rem:QIcomplexity}. Moreover, the figure also indicates that the multiscale shares the preferred timing of Shepard's method for small fill distance values. As the number of levels increases, the running time of the multiscale method gradually grows. 

\begin{figure}[!ht]
		\centering
		\begin{subfigure}{0.45\textwidth}
    \includegraphics[width=\textwidth]{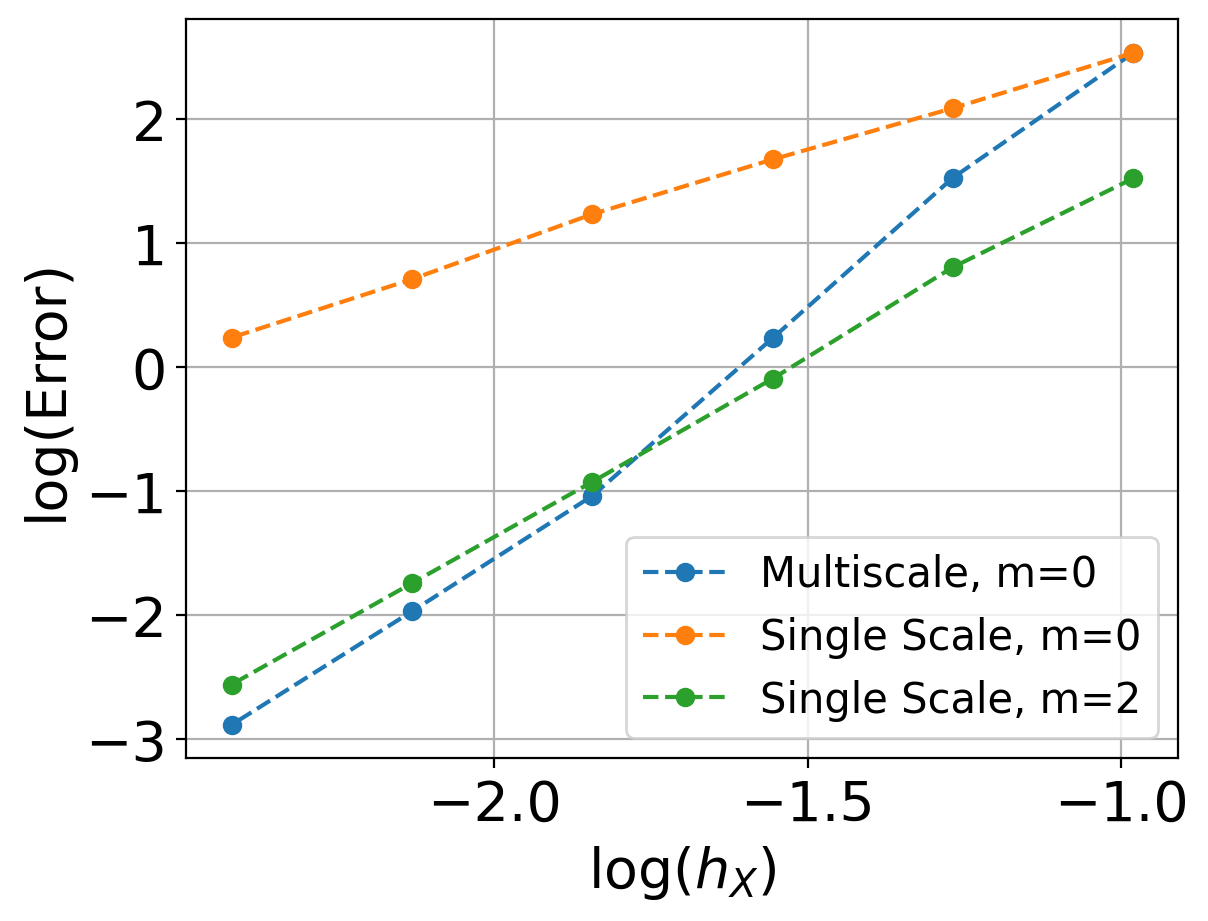}
			\caption{Error rates}
		\end{subfigure} \quad
		\begin{subfigure}{0.45\textwidth}
    \includegraphics[width=.97\textwidth]{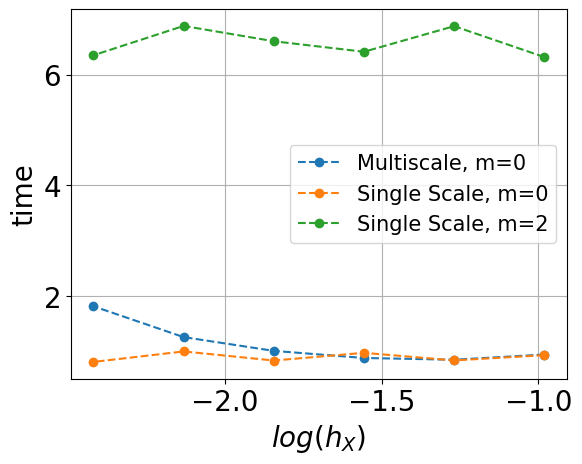}
			\caption{Timing in seconds} \label{subfig:timing}
		\end{subfigure}
    \caption{A comparison of error decay rates and running time between constant reproduction multiscale, and two quasi-interpolations based on constant reproduction and quadratic reproduction, for the approximation of the function $g$ of~\eqref{eq: approximated functions}. }
    \label{fig: const_multi_vs_quadratic_single}
\end{figure}

\begin{remark} \label{rem:QIcomplexity}
	\rev{The Construction of quasi-interpolation with a polynomial reproduction degree higher than $0$ usually involves solving linear systems, as in~\eqref{03-mlssystem}. Typically, these linear systems tend to be ill-conditioned. While the problem of ill-conditioning can be softened using properly shifted and scaled basis functions, see, e.g.~\cite{mirzaei2012generalized}, it cannot entirely be avoided as it also heavily depends on the geometry of the data sites. Moreover, the calculation is still significantly more expensive than the simple Shepard's method, see Figure~\ref{subfig:timing}. 
	
	For a more analytical aspect,} we briefly explain the complexity of quasi-interpolation for the above degrees of polynomial reproduction. First, to approximate $Q(f)$ at point $x$, we use data of points $x_i\in \operatorname{supp}\phi_\delta(\|x-\cdot\|_2)$. These points are the neighbours of $x$ as they satisfy $\|x-x_i\|_2\leq \delta$, where $\delta$ is the support radius of $\phi_\delta$. 
	
	We organize the sites $X$ in a k-d tree, with cost $O(dN \log N)$, where $N=\#X$, and $d$ is the dimension of the domain. For a point $x$, a query of points in a radius $\delta$, using the k-d tree, costs $O(\log N)$. Denote the complexity of weight calculation with $\omega$. In constant reproduction $\omega=1$. Else, in higher degree we solve a linear system meaning $\omega=Q^3(Q+\kappa)$, where $Q = \dim \left( \pi_m(\R^d)\right) = \binom{m+d}{d}$ (see, e.g., \cite[Theorem 2.5]{wendland_2004}. Due to quasi-uniformity, for any $\delta$ there is a bound $\kappa$ to the number of points in the $\delta$-neighbourhood of $x$. So, finally, the complexity of the quasi-interpolation of a single point $x$ is: $O(\log N + \omega \kappa)$.
\end{remark}

\subsection{Anomaly detection over scattered data by multiscale} \label{subsec:anomaly_detection}

\rev{When applying a local quasi-interpolation operator, the approximation error is related to the local smoothness of the approximated function. When based on local quasi-interpolation, the multiscale method uses several steps of local approximations. It thus provides a powerful alternative to analyze the function's local smoothness, each time with respect to the associated level. In other words, the multiscale approach \rev{appears also to offer} a tool for detecting non-smoothness. In turn, this analysis mechanism shows the efficiency of anomaly detection, similar to what we may find in classical literature, for example, in~\cite{mallat1992singularity}. We use this approach to detect anomalies based on the approximation error of our multiscale method.}

We demonstrate the above by approximating a function that we contaminate with a minor, synthetic anomaly. The approximated function is:
\begin{equation*}
    \label{eq: synthetic anomaly}
    \tilde{f}(x, y) =\begin{cases}
      1.01 \cdot f(x,y), & (x, y) \in [0.1, 0.25]\times[0.2, 0.4],\\
      f(x,y), & \operatorname{else},
\end{cases}
\end{equation*}
where $f$ is from~\eqref{eq: approximation func}. The approximation's configuration is the same as before, using Halton points as our scattered samples. We show in Figure~\ref{fig:synthetic anomaly} the fifth scale's error distribution. Indeed it is easy to see that the anomaly region is easily detected using the multiscale's error.

\begin{figure}
    \centering
    \begin{subfigure}[b]{.31\textwidth}
		\centering
		\includegraphics[width=\textwidth]{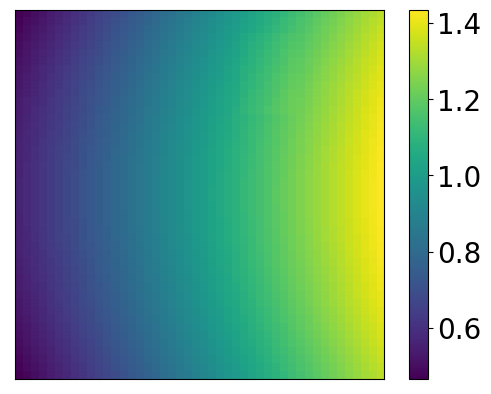}
		\caption*{The function $\tilde{f}$}
	\end{subfigure}  \hspace{4pt}
    \begin{subfigure}[b]{.32\textwidth}
		\centering
		\includegraphics[width=\textwidth]{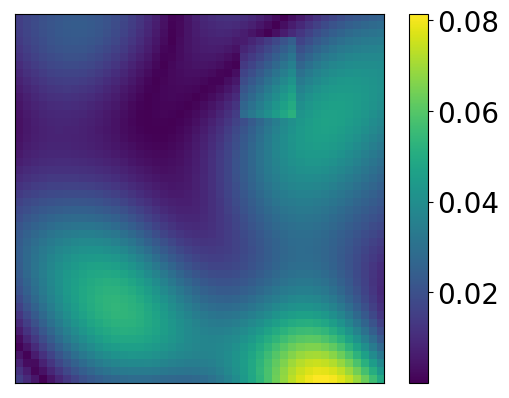}
		\caption*{First level error: $|\tilde{f}-\tilde{f}_1|$}
	\end{subfigure} 
	\begin{subfigure}[b]{.33\textwidth}
		\centering
		\includegraphics[width=\textwidth]{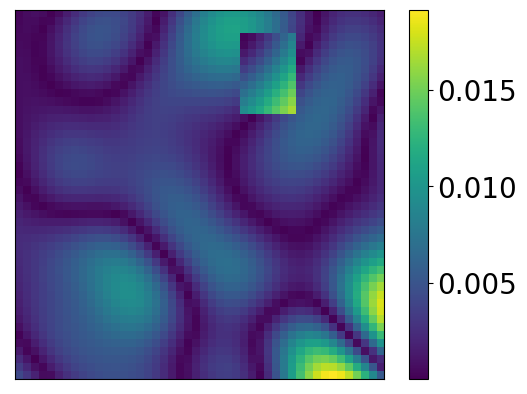}
		\caption*{Third level error: $|\tilde{f}-\tilde{f}_3|$}
	\end{subfigure} \\ \vspace{10pt}
	\begin{subfigure}[b]{.325\textwidth}
		\centering
		\includegraphics[width=\textwidth]{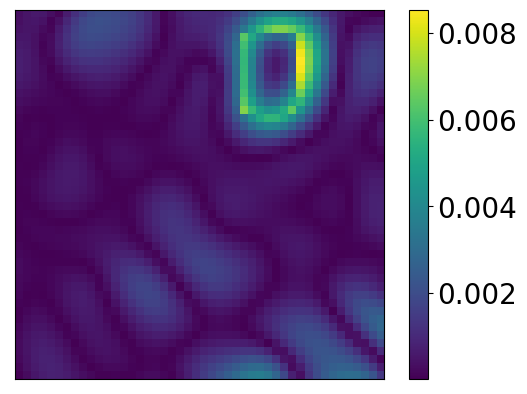}
		\caption*{Fifth level error: $|\tilde{f}-\tilde{f}_5|$ \\ (multiscale)} 
	\end{subfigure} \qquad 
	    \begin{subfigure}[b]{.33\textwidth}
		\centering
		\includegraphics[width=\textwidth]{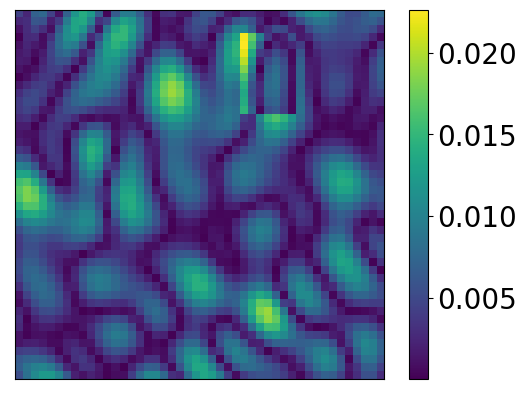}
		\caption*{Quasi-interpolation error: \\ $|\tilde{f}-Q_{X_5}(\tilde{f})|$ \quad (single scale)}
	\end{subfigure} 
    \caption{Anomaly detection using the multiscale approach. In the upper row, the function and two error maps correspond to the first and third levels of the multiscale. In the bottom row, the two error maps of the multiscale (left) and single scale of quasi-interpolation (right) correspond to the data of the fifth level. The anomaly is seen clearly thorough the multiscale error maps and especially over the last level. Detecting the same anomaly over the function (top left) or via the error map of the quasi-interpolation (bottom right) is much harder.}
    \label{fig:synthetic anomaly}
\end{figure}

\subsection{Scattered data approximation of manifold-valued functions}

We demonstrate our multiscale analysis based on quasi-interpolation for functions defined on $\R^2$ and with values on $\SO(3)$ (oriented rotations) and $\SPD(3)$ (symmetric positive definite matrices). We use the quasi-interpolation of the form~\eqref{eq: quasi-interpolation manifold}. The error between two functions $F$ anf $H$, analogous to~\eqref{eqn:error}, is 
\begin{equation*}
    \max_{x_i \in G(R, h)}\rho \left(F(x_i), H(x_i)\right), 
\end{equation*}
where the grid $G(R, h)$ has the same properties of the test grid in real valued scattered data.

\subsubsection{Approximation over oriented preserving rotations}
When evaluating~\eqref{eq: quasi-interpolation manifold}, we required to calculate Karcher means. For that, we use a iterative process suggested in~\cite{manton2004globally} and defined as,
\begin{equation} \label{eqn:iter_RCoM_SO}
    Y_{j+1} = \Exp{Y_j}{\frac{\sum_{i=1}^N {a_i\Log{Y_j}{F(x_i)}}}{\sum_{i=1}^N{a_i}}},
\end{equation}
where $Y_0$ is set to be one of the values $F(x_i)$, and we stop
iterating when $\rho(Y_{j+1},Y_j)<\varepsilon$ for a predefined small tolerance $\varepsilon>0$. \rev{Note that both \cite{karcher1977riemannian, manton2004globally} indicate linear convergence of the iterative process~\eqref{eqn:iter_RCoM_SO}. Therefore, the computational load of applying such a procedure depends on the accuracy needed. In our examples, we generally observed that calculating the Karcher mean by~\eqref{eqn:iter_RCoM_SO} was significantly more costly, sometimes by a factor of $10$ to $20$, depending on the prescribed  precision, compared to approximating the Karcher mean via finite procedures such as exp-log~\cite{grohs2012definability, rahman2005multiscale}, repeated binary averaging~\cite{dyn2017global, wallner2005convergence}, and inductive means~\cite{dyn2017manifold}. Therefore, these alternatives may be an excellent practical choice in some scenarios. However, in this section and the following tests, we keep the more accurate option, highlighting errors that stem from applying the approximation methods and not their manifold adaptation.}

Here, we provide an example where we approximate a function over rotations. The function is defined as follows by the Euler angles, which are taken via the xyz convention:
\begin{equation}
    \label{eq: so(3) function}
    F(x, y) = \operatorname{Euler}_{xyz}(1.2\sin{5x-0.1}, y^2/2-\sin{3x}, 1.5\cos{2x}).
\end{equation}
In practice, we form the $\SO(3)$ rotations from the Euler angles
using the python package scipy~\cite{2020SciPy-NMeth}. We visualize
the function by plotting the application of it, as field of
transformations, on a fixed vector, \rev{chosen to be the north pole of the unit sphere in $\mathbb{R}^3$}. This illustration appears at the
most left image of Figure~\ref{fig:rotations}. As in previous
examples, we compare our multiscale method with a direct application of the quasi-interpolation, \rev{that is a method similar to the suggested in~\cite{grohs2017scattered}}. The samples are scattered over our planer domain, as we demonstrate in the middle image of Figure~\ref{fig:rotations}. The comparison graph appears as the most right image of Figure~\ref{fig:rotations}, where we see how the multiscale error decays faster. 
\begin{figure}[ht]
	\centering
	\begin{subfigure}[b]{.3\textwidth}
		\centering
		\includegraphics[width = \textwidth]{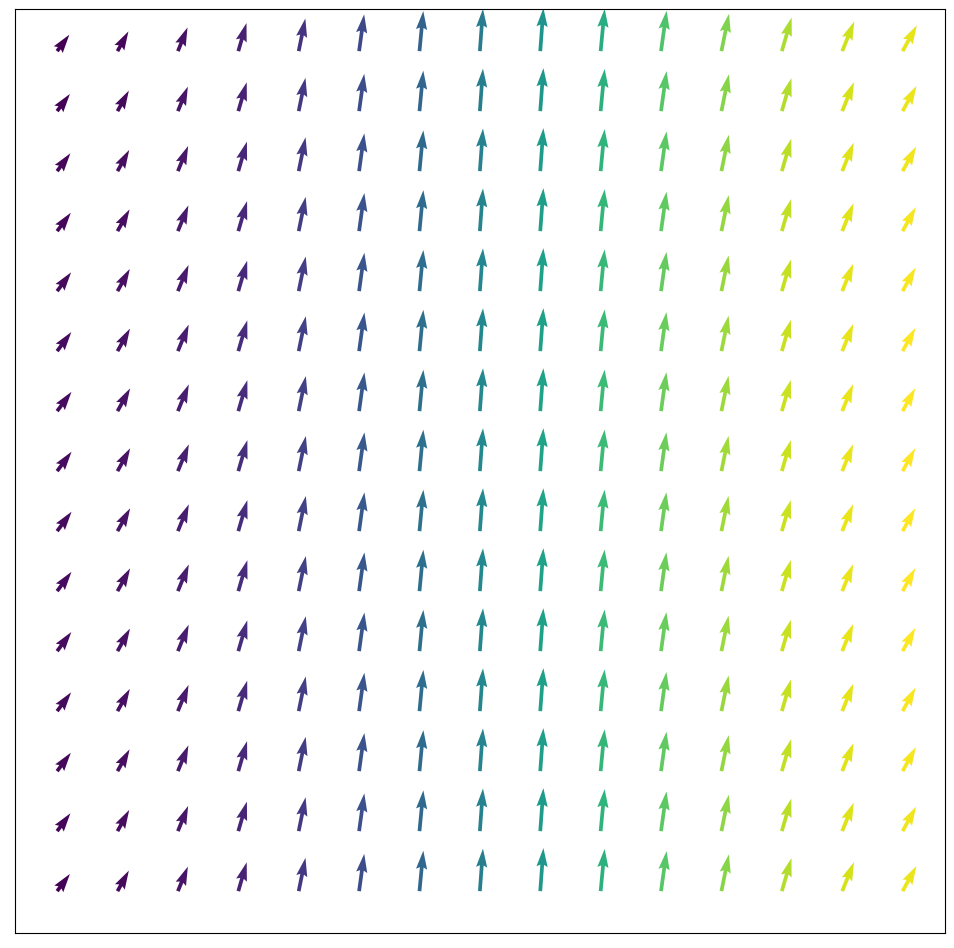}
		\caption*{The function $F$ of~\eqref{eq: so(3) function}}
		\label{fig: rotations original}
	\end{subfigure} \quad
        \begin{subfigure}[b]{.295\textwidth}
        \centering
        \includegraphics[width = \textwidth]{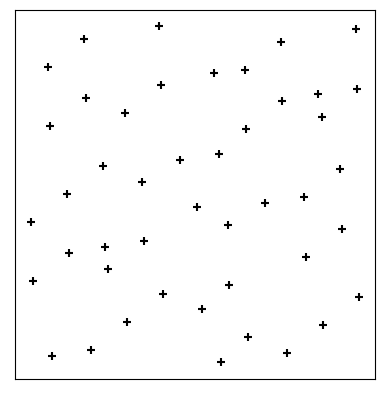}
        \caption*{Scattered sites of $X_1$}
        \end{subfigure}  \quad
		\begin{subfigure}[b]{.33\textwidth}
		\centering
		\includegraphics[width = \textwidth]{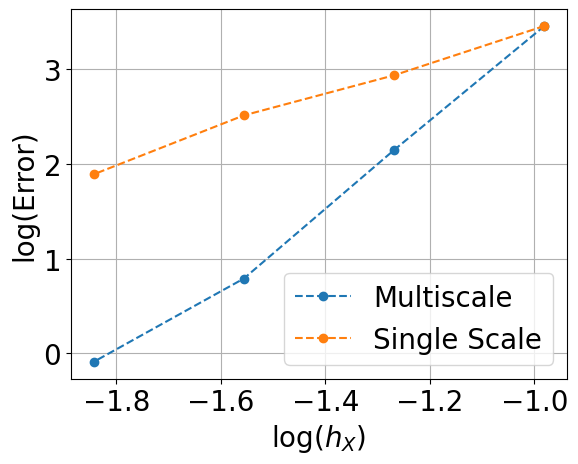}
		\caption*{Error rates}

	\end{subfigure} \hspace{2pt}
	\caption{Comparison of approximating a field of rotations over a planar domain. The rotations in $F$ are visualized by the effect of $F(x, y)$ on a fixed vector \rev{$\hat{x} = (1,0,0) \in \mathbb{R}^3$,} where the color represents the $z$ element $F(x, y)(\hat{x})_z$. The approximation sites are scattered at each level as a Halton sequence.
	On the left: the function to be approximated. In the middle: a demonstration of the scattered data sites of the first level. On the right: the approximation error rates.}
	\label{fig:rotations}
\end{figure}

\subsubsection{Approximation over symmetric positive definite matrices}

\begin{figure}[!hb]
	\centering
	\begin{subfigure}[t]{.3\textwidth}
		\centering
		\includegraphics[width = \textwidth]{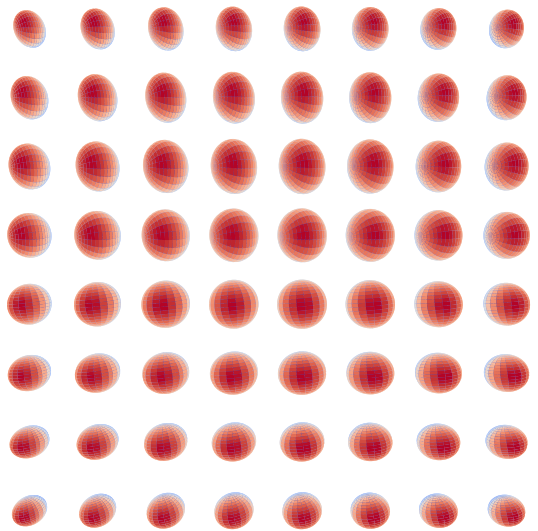}
		\caption*{The function $F$ of~\eqref{eq: spd(3) function}}
	\end{subfigure} \qquad
	\begin{subfigure}[t]{.38\textwidth}
		\centering
		\includegraphics[width = \textwidth]{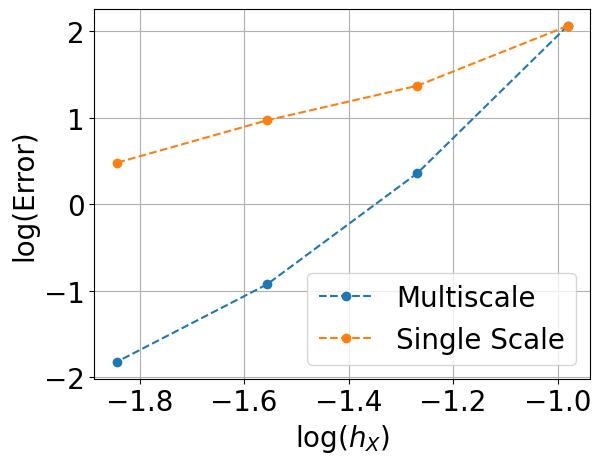}
		\caption*{Error rates}
	\end{subfigure} \hspace{2pt}
	\caption{Comparison of approximating a field of SPD matrices over a planar domain. The matrices are visualized by their associated centered ellipsoids. The approximation sites are scattered at each level as a Halton sequence. On the left: the function to be approximated. On the right: the approximation error rates.}
	\label{fig:spd}
\end{figure}

Recall that in $\SPD(d)$, the exponential and logarithms get the symmetric form
\begin{equation*}
    \Log{X}{Y} = X^{1/2}\log{\left(X^{-1/2}Y X^{-1/2}\right)}X^{1/2}, \quad \Exp{X}{y} =  X^{1/2}\exp{\left(X^{-1/2}y X^{-1/2}\right)}X^{1/2},
\end{equation*}
where $\log$ and $\exp$ are the standard matrix functions. Then, the geodesic distance is
\begin{equation*}
    \rho(X, Y) = \norm{\log{\left(X^{-1/2}Y X^{-1/2}\right)}}_2,
\end{equation*}

To evaluate the Karcher means in the quasi-interpolation, we use iterations from~\cite{Iannazzo2019} and define:
\begin{equation*}
    Y_{j+1}=\Exp{Y_j}{\theta_j\sum_{i=1}^N{a_i\Log{Y_j}{F(x_i)}}},\quad \theta_j = \frac{2}{\sum_{i=1}^N{a_i(x)\frac{c_i^{(j)}+1}{c_i^{(j)}-1}}},
\end{equation*}
where $c_i^{(j)}$ is the condition number of $Y_j^{-1/2}F(x_i)Y_j^{-1/2}$, and $Y_0 = \sum_{i=1}^N {a_i F(x_i)}$, which is a symmetric positive definite matrix as long as $a_i\geq0$, which is the case for our Shepard’s method~\eqref{eq: shepard}.

In the current example, we approximate a function $F$ of $\SPD(3)$, of the form
\begin{equation}
	\label{eq: spd(3) function}
	F(x, y) = G(x, y) + \left(G(x, y) \right)^T ,
\end{equation}
where $G(x, y) = \left(\abs{\cos{2y}+0.6}\right)e^{-x^2-y^2}\left(5I+A
\right) +I$, $A=\begin{pmatrix}
	\sin{5y} && y && xy \\
	0 && 0 && y^2 \\
	0 && 0 && 0
\end{pmatrix}$. We use the representation of any SPD matrix as a centered ellipsoid. This ellipsoid has main axes that are determined by the eigenvectors of the matrix and their lengths are the associated eigenvalues. Then, an illustration of $F$ of~\eqref{eq: spd(3) function} is given as the left image of Figure~\ref{fig:spd}.

Again, we compare the error decay of the multiscale approximation versus the quasi-interpolation. The data sites where taken again, scattered as a Halton sequence and the error decay rates are provided in the right image of Figure~\ref{fig:spd}. Also here, the multiscale's error decays faster.

\subsubsection{Application of denoising a field of rotations}
We summarize the numerical section with an application of our manifold-valued scattered data approximation. This application is a denoising process where we use the multiscale error rates as indicators for noise removal. In particular, and as done in classical signal processing, we adopt a procedure where we diminish the levels of approximation where error drops below a fixed threshold.

In our example, we use the same field of rotation $F$ of~\eqref{eq: so(3) function}. Then, we contaminate the function by changing each value with its product against the exponential of a matrix which was sampled from a normal distribution with a fixed variance $\sigma^2$. In other words, the random matrix is calculated in the Lie algebra and then projected back to the group. The samples are taken in a scattered-fashion from a Halton sequence

\rev{We follow a classical denoising process based on thresholding, see ,e.g.,~\cite{donoho1995noising}. Our} denoising process starts after the first approximation level; then, we filter out from the sites set $X_n$ all the sites with error larger than $t\cdot \max_{x\in{X_n}}{\norm{F(x)\ominus F_{n-1}(x)}}$, where $0\le t\le1$ is a fixed threshold.

The first results of the above denoising process appear in Figure~\ref{fig:ComparingSigmas}, where we present the error rates under various $\sigma$ values,\rev{ ranging from $0.01$ to $1$, which we recall being, as before, the variance of noise matrix in the tangent space.} As the variance of the noise increases, the denoising procedure struggles, and the associated decay rates implicitly indicate that more noise is presented in the samples or that the function is less ``smooth.'' From several tests, we also learn that the denoising process is not sensitive to choosing different threshold values and in particular that all tested values between $0.75 \le t \le 0.95$ lead to similar results.

\begin{figure}[ht]
    \centering
    \includegraphics[width = 0.4\textwidth]{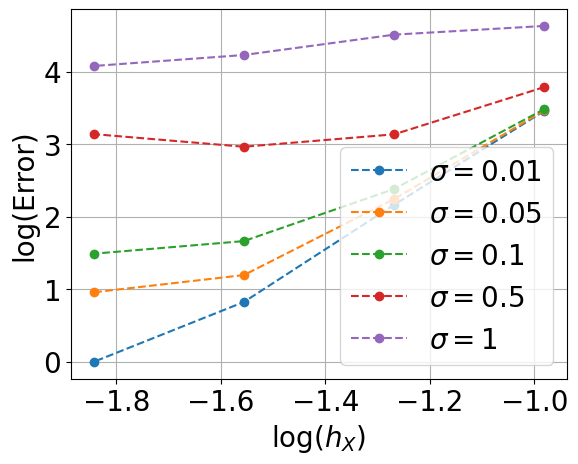}
    \caption{Approximation error rates, for our multiscale approximation, of fields of rotations contaminated with different levels of noise.	}
    	    \label{fig:ComparingSigmas}
\end{figure}

The visual result of the above denoising process appears in Figure~\ref{fig:Denoising Example}, where we present the original function (left), its noisy version (middle), and our denoised approximation (right).

\begin{figure}[ht]
    \centering
	\begin{subfigure}[b]{.32\textwidth}
    \centering
    \includegraphics[width=\linewidth]{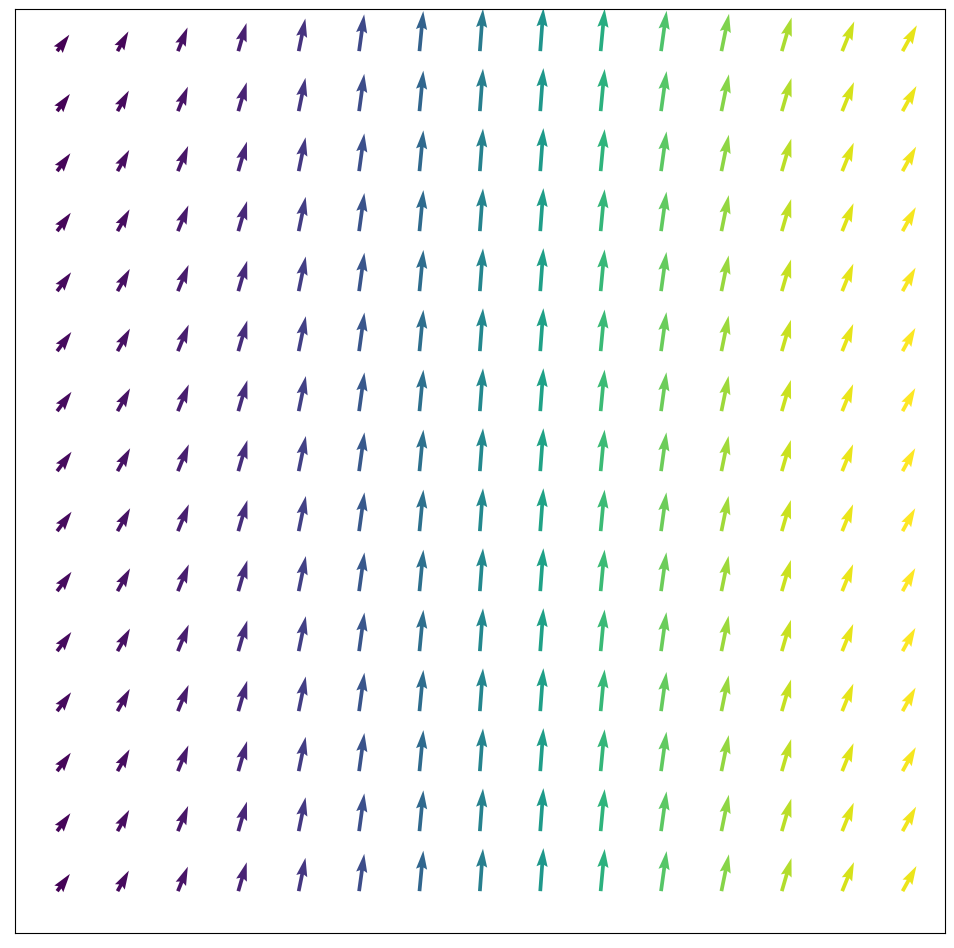}
    \caption*{$F(x, y)$}
    \label{fig:DenoisingOriginal}
    \end{subfigure}
\begin{subfigure}[b]{.32\textwidth}
    \centering
    \includegraphics[width=\linewidth]{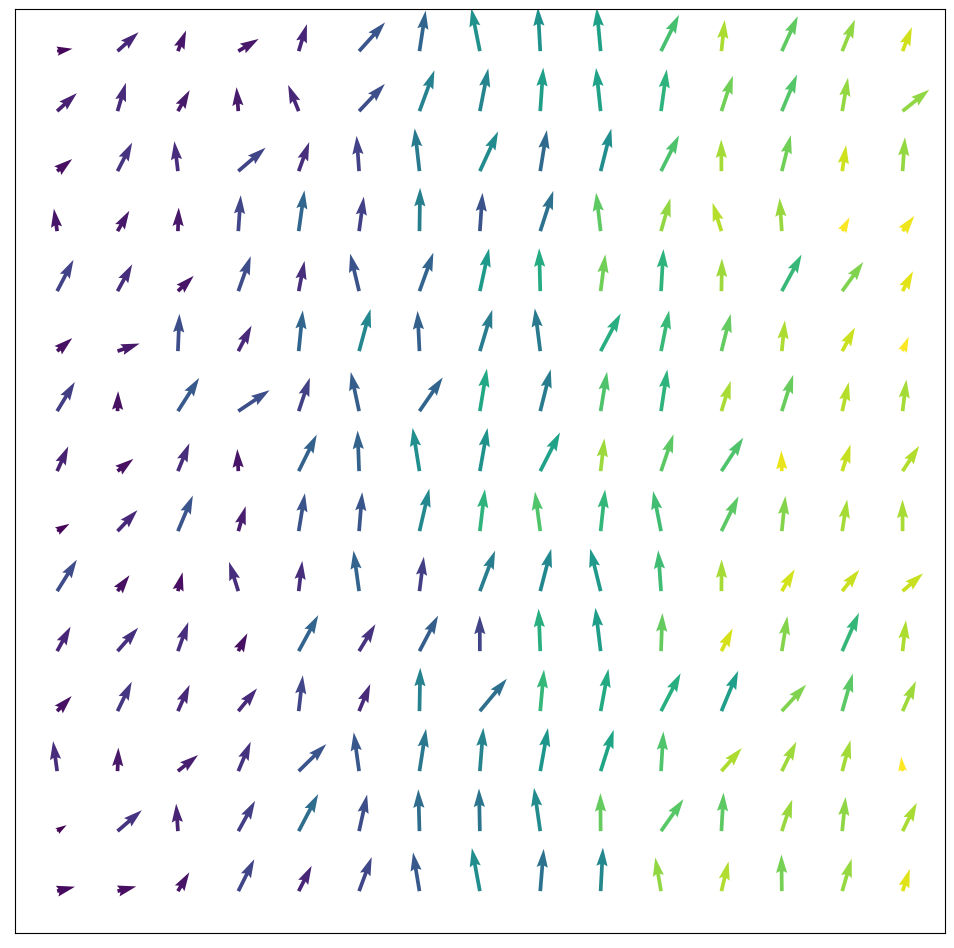}
    \caption*{The contaminated function}
    \label{fig:DenoisingNoise}
    \end{subfigure}
\begin{subfigure}[b]{.32\textwidth}
    \centering
    \includegraphics[width=\linewidth]{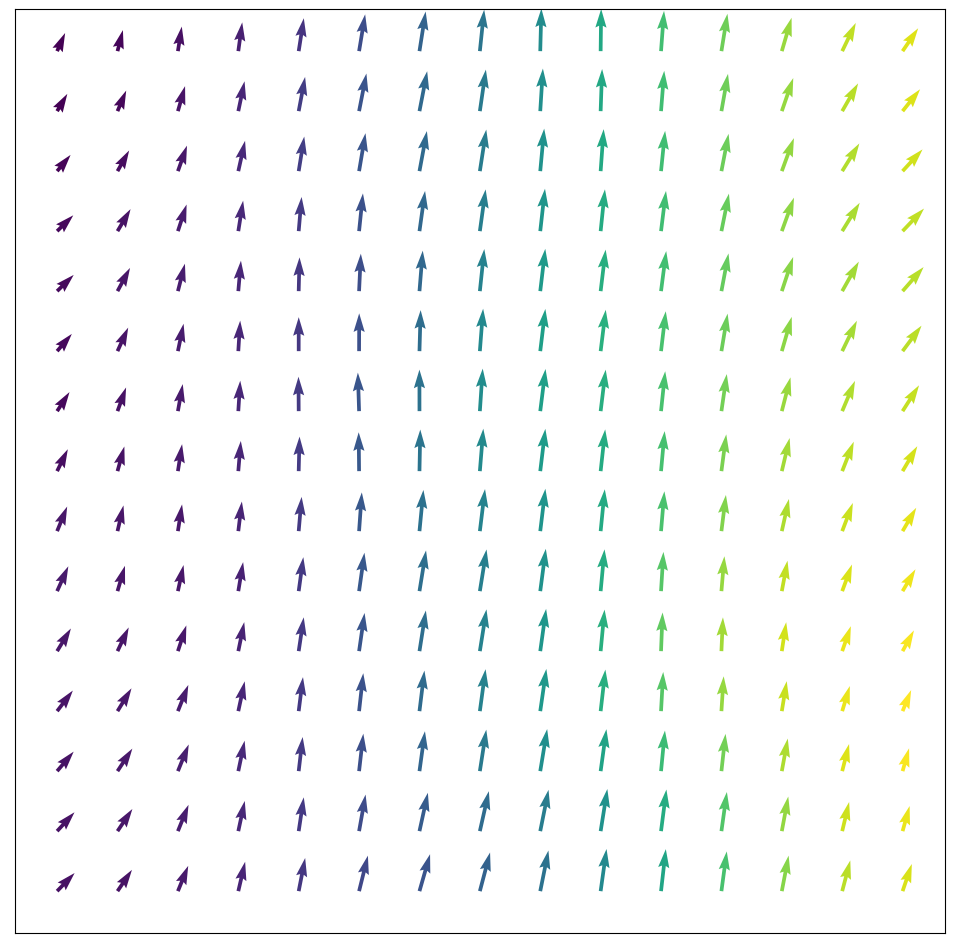}
    \caption*{Our approximation}
    \label{fig:DenoisingResult}
    \end{subfigure}
    \caption{Visualization of $F(x, y)$ of~\eqref{eq: so(3) function} (left), its noisy version with variance of $\sigma=0.2$ (middle) and the denoised approximation (right). The site picking is done with a threshold of $t=0.9$.}
    \label{fig:Denoising Example}
\end{figure}

\section*{Acknowledgments}
We thank Nira Dyn for her contribution to this project's conception and first steps. NS is partially supported by the NSF-BSF award 2019752. \rev{NS and HW are partially supported by the DFG award 514588180}.

\bibliographystyle{plain}
\bibliography{bibliography}


\appendix

\section{Implementation details of scattered data} \label{app: scattered data}

Our approximation is local and requires data from nearby sites. This is a result of using a local quasi-interpolation and it offers many advantages. When the sites (the function samples in the parameter space) are on a grid, it is easy to find the closest points just by indices calculation. However, when working with scattered points, finding the nearest points requires special attention.

To generate sites for our examples, we choose to use Halton sequence~\cite{halton1960efficiency}. Specifically, the Halton sequence is constructed using coprime numbers as its bases, where we use the bases $2, 3$. As our basic set, we generate such a Halton sequence with $n=400$ points and denote it by $H_{2,3}(400)$. Then, we scale and duplicate the sites, while keeping the ratio between the fill distance and the separation radius (see~\eqref{eqn:fill_dist_separation_rad}) fixed as $h_X / q_X = 4.01$.  Next, we scale it with $r = h_X / h_{H_{2,3}(400)}$, where $h_X$ is the requested fill distance. To fill a given domain $\Omega$, we cover it with translates to obtaining 
\begin{equation*}
    X = \bigcup_{i=0}^{\lceil \frac{x_{\max} - x_{\min}}{r} \rceil}\bigcup_{j=0}^{\lceil \frac{y_{\max} - y_{\min}}{r} \rceil} \Big\{ r H_{2,3}(400) + (x_{\operatorname{min}} + r i, y_{\operatorname{min}} + r j)  \Big\}   \quad .
\end{equation*} 
We show an example for such a set of scattered points in Figure~\ref{fig: halton}.
\begin{figure}[ht]
    \centering
    \includegraphics[width=0.5\linewidth]{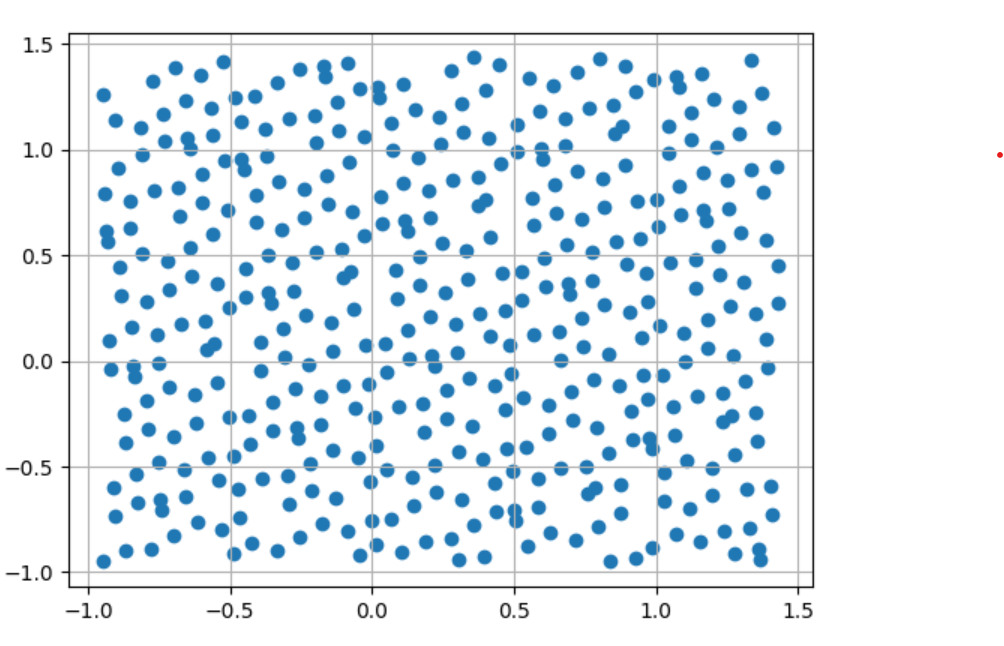}
    \caption{Halton points with $h_X=0.158$, which is equivalent to the fourth scale in our examples.}
    \label{fig: halton}
\end{figure}

Afterward, when the data sequence is generated, we use \textit{k-d trees} as the data structure for the scattered locations. In practice, our implementation utilizes the package \textit{{pykdtree}} for querying nearest neighbors within the RBF's support. Its theoretical performance is discussed in \cite[Section~14.2]{wendland_2004}.

\end{document}